\def \RR {{\mathbb{R}}}
\def \EE {{\mathbb{E}}}
\def \VV {{\mathbb{V}}}
\def \D  {{\mbox{d}}}
\def \eps {{\varepsilon }}
\def \Ito {{It\^{o} }}
\newcommand{\fracs}[2]{{\textstyle \frac{#1}{#2}}}
\def \hu {{\widehat{u}}}
\def \hE {{\widehat{E}}}
\def \hX {{\widehat{X}}}
\def \htau {{\widehat{\tau}}}
\def \hP {{\widehat{P}}}
\def \of {{\overline{f}}}
\def \oP {{\overline{P}}}
\def \otau {{\overline{\tau}}}
\def \utau {{\underline{\tau}}}
\def \exit {{\rm exit}}
\newtheorem{theorem}{Theorem}[section]
\newtheorem{lemma}[theorem]{Lemma}
\newtheorem{corollary}[theorem]{Corollary}
\begin{document}

\title{Multilevel estimation of expected exit times
       and other functionals of stopped diffusions}
\author{M.B.~Giles \and F.~Bernal}
%\date{}

\maketitle

\begin{abstract}
This paper proposes and analyses a new multilevel Monte Carlo method 
for the estimation of mean exit times for multi-dimensional Brownian 
diffusions, and associated functionals which correspond to solutions 
to high-dimensional parabolic PDEs through the Feynman-Kac formula.
In particular, it is proved that the complexity to achieve an $\eps$
root-mean-square error is $O(\eps^{-2}\, |\!\log \eps|^3)$.

(To appear in SIAM/ASA Journal on Uncertainty Quantification)
\end{abstract}

\section{Introduction}

In this paper we are concerned with solutions of a
stochastic differential equation (SDE)
\begin{equation}
\D X_t = a(X_t, t) \, \D t + b(X_t, t)\, \D W_t,
~~~~~~ 0<t\leq T,
\end{equation}
with deterministic initial condition $X_0 \!=\! x_0$.
We assume that $X_t \!\in\! \RR^d$ and $W \!=\! \{W_t: t\!\geq\! 0\}$ is 
a standard $d'$-dimensional Brownian motion.
Furthermore, we assume that 
$a:\RR^d \times [0,T] \rightarrow \RR^d$ and 
$b:\RR^d \times [0,T] \rightarrow \RR^d\times \RR^{d'}$
are Lipschitz continuous with Lipschitz constants $L_a, L_b$ such that
\begin{eqnarray*}
\| a(x,t) - a(y,s) \|_2 &\leq& L_a \left( \|x\!-y\|_2 + |t\!-\!s| \right),\\
\| b(x,t) - b(y,s) \|_2 &\leq& L_b \left( \|x\!-y\|_2 + |t\!-\!s| \right),~~~
\forall (x,t),(y,s) \in \RR^d \times [0,T],
\end{eqnarray*}
and also that $b\, b^T$ is everywhere non-singular.

Given a simply-connected bounded open domain 
$D\subset \RR^d$, and a fixed point $x_0 \!\in\! D$,
we are interested in estimating the expected exit time 
$\EE^{x_0,0}[\tau]$, 
where the suffix on $\EE^{x,t}$ for time $t: 0\!\leq\!t\!\leq\!T$ 
indicates it is an expectation conditional on the path $X_s, s\!>\!t$, 
starting at $X_t \!=\! x \in D$, 
and the corresponding exit time $\tau$ is defined as 
$\tau \!=\! \min( T, \inf \{ s\!>\!t: X_s \!\notin\! D \} )$.

In addition, we are interested in estimating $u(x_0,0)$ which 
is defined by
\begin{equation}
u(x,t) = \EE^{x,t}\left[ \int_t^\tau E(t,s)\, f(X_s,s) \, \D s 
+ E(t,\tau)\, g(X_\tau,\tau) \right],
\label{eq:functional}
\end{equation}
where
\[
E(t_0,t_1) = \exp\left( - \int_{t_0}^{t_1} V(X_t,t) \, \D t \right).
\]
We assume that the functions
$f:\RR^d \times [0,T] \rightarrow \RR$, 
$g:\RR^d \times [0,T] \rightarrow \RR$, 
$V:\RR^d \times [0,T] \rightarrow \RR$, are all Lipschitz 
continuous with Lipschitz constants $L_f, L_g, L_V$ respectively, 
and also that $g\!\in\! C^{2,1}(D\times[0,T])$, 
with its spatial Hessian $H_g$ and time derivative 
$\dot{g}\!\equiv\! \partial g/\partial t$ both bounded.

The Feynman-Kac formula \cite{kac49,ks91} establishes that 
if the boundary $\partial D$ is sufficiently smooth, $u(x,t)$ 
satisfies the PDE
\begin{equation}
\frac{\partial u}{\partial t} + 
\sum_j a_j(x,t)\frac{\partial u}{\partial x_j} + 
\fracs{1}{2}\! \sum_{j,k,l} b_{j,k}(x,t) b_{l,k}(x,t) 
\frac{\partial^2 u}{\partial x_j \partial x_l}
- V(x,t)\, u(x,t) + f(x,t) = 0,
\label{eq:feynman-kac}
\end{equation}
inside the domain $D \times [0,T)$, subject to Dirichlet boundary 
conditions $u(x,t) \!=\! g(x,t)$ 
when either $x\!\in\! D, t\!=\!T$ or  $x\!\in\! \partial D, 0\!<\!t\!<\!T$.
%on the boundaries $\partial D \!\times\! [0,T]$ and $D \!\times\! \{T\}$.

When using an Euler-Maruyama discretisation for the SDE, with a uniform 
timestep of size $h$, standard analysis gives an $O(h^{1/2})$ strong error
with regards to the path approximation,
and Gobet \& Menozzi \cite{gm10} 
have proved that the weak error for the expected stopping time
is also $O(h^{1/2})$.  To achieve a root-mean-square error of $\eps$
requires $O(\eps^{-2})$ path samples, with $h\!=\!O(\eps^2)$ so that the 
average cost of each path sample is $O(\eps^{-2})$.  Hence the total
computational cost is $O(\eps^{-4})$.

The $O(h^{1/2})$ weak error is due to the $O(h^{1/2})$ movement in the 
SDE solution within each timestep; this is not accounted for by the 
standard piecewise constant interpolation of the Euler-Maruyama
discretisation.  One possible improvement would be the use of a Brownian
Bridge interpolant \cite{glasserman04} which would allow one to sample 
the minimum distance to smooth boundaries within each timestep, 
improving the weak order to $O(h)$.  Another approach, developed by 
Gobet \& Menozzi \cite{gm10}, based on an original idea by Glasserman, 
Broadie \& Kou \cite{bgk97}, introduces a boundary correction, offsetting 
the boundary in the normal direction by a distance which is $O(h^{1/2})$.
This too improves the weak error to $O(h)$, and so the overall 
computational cost is reduced to $O(\eps^{-3})$.

Building on Giles' multilevel Monte Carlo (MLMC) method for barrier 
options \cite{giles08b}, Primozic \cite{primozic11} developed a 
MLMC algorithm for estimating the mean exit time for 1D diffusions 
with planar boundaries.  This used the Milstein discretisation which 
gives an $O(h)$ strong error, combined with a Brownian Bridge estimate 
of the probability of crossing the boundary within each timestep.  
This gives an $O(\eps^{-2})$ computational complexity, but its 
generalisation to multidimensional applications is limited to those 
cases in which the underlying SDE satisfies a commutativity condition 
which means that the L{\'e}vy areas for each timestep are not required 
in the implementation of the Milstein discretisation \cite{glasserman04}.

Higham {\it et al} \cite{hmrsy13} instead proposed a MLMC method 
based on the Euler-Maruyama discretisation, and proved that its 
complexity is $O(\eps^{-3}\, |\!\log \eps|^3)$.  If they had used
either of the two methods mentioned previously to make the 
weak error first order, this would improve to 
$O(\eps^{-2.5}\, |\!\log \eps|^3)$.

The objective in this paper is to improve the numerical algorithm
analysed by Higham {\it et al} to reduce the computational complexity
to $O(\eps^{-2}\, |\!\log \eps|^3)$.  This is achieved by addressing 
the problem identified in the analysis in \cite{hmrsy13}, which is 
the poor decay in the variance of the multilevel correction as 
$h\!\rightarrow\! 0$.  This is approximately $O(h^{1/2})$ because there 
is an $O(h^{1/2})$ probability that a path which starts within 
$O(h^{1/2})$ of the boundary will continue for a time which is $O(1)$ 
before exiting the domain.  

We fix this problem by splitting the fine or coarse path simulation 
into multiple independent copies once the other one has exited the 
domain.  Averaging over these sub-paths approximates the conditional 
expectation, and reduces the multilevel variance to approximately $O(h)$.
However, the expected value on each level is not changed, and 
therefore the telescoping summation which lies at the hear of MLMC 
remains valid.

The paper begins with a lemma bounding the variance of the Feynman-Kac
functional, followed by a review of the Euler-Maruyama approximation of 
the SDE path and the path functional, the Gobet \& Menozzi technique for 
improving the weak convergence, and the MLMC algorithm used by 
Higham {\it et al}.

The new algorithm is then presented and analysed.  The numerical 
analysis relies heavily on the theoretical results of Gobet \& 
Menozzi \cite{gm10}, 
and is similar in structure to the analysis 
of Higham {\it et al} \cite{hmrsy13}.  The effectiveness of the 
new algorithm is demonstrated through numerical experiments, and 
directions for future research are discussed in the final conclusions.

\section{Exit times and Feynman-Kac functionals}

If we define $u_{\exit}(x,t)$ to correspond to the 
particular Feynman-Kac functional in which
$f_{\exit}(x,t)\equiv 0$, $g_{\exit}(x,t)\equiv t$, $V_{\exit}(x,t)\equiv 0$, 
then $u_{\exit}(x_0,0)$ is equal to 
the expected exit time $\EE^{x_0,0}[\tau]$ defined previously. 

We now make a key assumption.

{\bf Assumption 1}: There are Lipschitz constants $L_u$, $L_{\exit}$ 
such that
\begin{eqnarray*}
| u(x,t) - u(y,s) | &\leq& L_u \left( \|x\!-y\|_2 + |t\!-\!s| \right),\\
| u_{\exit}(x,t) - u_{\exit}(y,s) | &\leq& L_{\exit} \left( \|x\!-y\|_2 + |t\!-\!s| \right),~~~
\forall (x,t),(y,s) \in D \times [0,T].
\end{eqnarray*}

\vspace{0.1in}

Comment: If the boundary $\partial D$ is sufficiently smooth, the 
Lipschitz conditions for $u(x,t)$ and $u_{\exit}(x,t)$ in Assumption 1 
follow from the assumed Lipschitz properties for $a, b, f, g, V$.  
However, this assumption might not be satisfied if there is a re-entrant 
corner within the domain.

The following lemma bounds the variance of the SDE functional
\[
P_{t_0} = \int_{t_0}^\tau E(t_0,s)\, f(X_s,s) \, \D s 
+ E(t_0,\tau)\, g(X_\tau,\tau).
\]
The equivalent theorem for the Euler-Maruyama discretisation will
play a critical role in the later numerical analysis.

\begin{lemma}
\label{lemma:var}
There exists a constant $c$ such that for any 
$x_0\in D, 0\!\leq\! \!t_0\!<\!T$
\[
\VV^{x_0,t_0}[ P_{t_0} ] \leq 
c \ \EE^{x_0,t_0}[ \tau \!-\! t_0 ].
\]
\end{lemma}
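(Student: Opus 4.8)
The plan is to exploit the fact that $u(x,t)$ satisfies the Feynman-Kac PDE (at least formally, via Assumption~1) and to use $u$ itself as a control variate. More precisely, I would apply the Feynman-Kac/Dynkin identity in the form
\[
u(X_{t_0},t_0) = \EE^{x_0,t_0}\!\left[ \int_{t_0}^\tau E(t_0,s)\, f(X_s,s)\, \D s + E(t_0,\tau)\, u(X_\tau,\tau) \right],
\]
and note that on $\partial D$ we have $u(X_\tau,\tau)=g(X_\tau,\tau)$ while at $\tau=T$ we also have $u(X_T,T)=g(X_T,T)$, so the right-hand side is exactly $\EE^{x_0,t_0}[P_{t_0}]$. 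Thus $\EE^{x_0,t_0}[P_{t_0}] = u(x_0,t_0)$, a deterministic constant. To bound the variance, I would instead work with the martingale
\[
M_t = E(t_0, t)\, u(X_t,t) + \int_{t_0}^t E(t_0,s)\, f(X_s,s)\, \D s ,
\]
which is an $\EE^{x_0,t_0}$-martingale on $[t_0,\tau]$ precisely because $u$ solves \eqref{eq:feynman-kac}. Then $P_{t_0} = M_\tau$ and $M_{t_0}=u(x_0,t_0)$, so $\VV^{x_0,t_0}[P_{t_0}] = \EE^{x_0,t_0}[(M_\tau - M_{t_0})^2]$, and by \Ito isometry this equals $\EE^{x_0,t_0}\big[\int_{t_0}^\tau \|\text{(diffusion coefficient of }M_s)\|_2^2\, \D s\big]$.

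The diffusion coefficient of $M_s$ is $E(t_0,s)\, (\nabla u(X_s,s))^T b(X_s,s)$. Here I would use Assumption~1: the Lipschitz property $|u(x,t)-u(y,s)|\leq L_u(\|x-y\|_2+|t-s|)$ gives $\|\nabla u\|_2 \leq L_u$ almost everywhere in $D$, while $E(t_0,s)\leq e^{\|V\|_\infty T}$ is bounded (using that $V$ is Lipschitz on the bounded-in-space, finite-time region, hence bounded there) and $\|b\|_2$ is bounded on $D\times[0,T]$ again by Lipschitz continuity plus compactness. So the integrand is bounded by a constant $c$, and
\[
\VV^{x_0,t_0}[P_{t_0}] \leq c\, \EE^{x_0,t_0}\!\left[ \int_{t_0}^\tau \D s \right] = c\, \EE^{x_0,t_0}[\tau - t_0],
\]
which is the claimed bound.

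The main obstacle is the rigorous justification of the martingale/Dynkin step: $u$ is only assumed Lipschitz (not $C^{2,1}$), so \eqref{eq:feynman-kac} holds only in a weak sense and \Ito's formula is not directly applicable to $u(X_t,t)$. The clean way around this is to \emph{not} apply \Ito to $u$ at all, but to note that $M_{t\wedge\tau}$ is a martingale by the Markov property and the very definition \eqref{eq:functional}: conditioning at time $t$, the tower property gives $\EE^{x_0,t_0}[P_{t_0}\mid \mathcal F_t] = E(t_0,t)\,u(X_t,t) + \int_{t_0}^t E(t_0,s) f(X_s,s)\,\D s$ on $\{t<\tau\}$, which is $M_{t\wedge\tau}$; hence $M_{t\wedge\tau}$ is automatically a martingale, closed by $M_\tau=P_{t_0}$. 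Then the variance is the expected quadratic variation of this martingale, and the remaining work is to show that quadratic variation accumulates at a bounded rate — which is where one does need a little care, since without \Ito we must identify $\D\langle M\rangle_s$. An alternative, and perhaps the route the authors take, is to replace $u$ by a smooth mollification $u_\delta$, absorbing the (uniformly bounded, by Assumption~1) errors, apply \Ito to $u_\delta(X_t,t)$ to get $\D\langle M_\delta\rangle_s = E(t_0,s)^2 \|(\nabla u_\delta)^T b\|_2^2\, \D s \leq c\,\D s$, and pass to the limit. Either way, once the martingale structure is in hand the Lipschitz bound on $\nabla u$ from Assumption~1 and boundedness of $E$ and $b$ on the compact set $\overline D\times[0,T]$ finish the proof immediately.
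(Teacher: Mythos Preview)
Your argument is essentially correct but takes a genuinely different route from the paper, and the difference matters.

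The paper does \emph{not} apply \Ito\ to $u$; it applies \Ito\ to $g$, which is assumed to lie in $C^{2,1}(D\times[0,T])$ with bounded Hessian and bounded $\dot g$. Writing $\D\big(E(t_0,s)\,g(X_s,s)\big)$ and integrating from $t_0$ to $\tau$ gives $P_{t_0}-g(x_0,t_0)=p^{(1)}+p^{(2)}$, where $p^{(1)}$ is an ordinary integral with bounded integrand (hence $|p^{(1)}|\le c(\tau-t_0)\le cT$) and $p^{(2)}=\int_{t_0}^\tau E\,(\nabla g)^T b\,\D W_s$ is a stochastic integral. Then $\VV[P_{t_0}]\le\EE[(P_{t_0}-g(x_0,t_0))^2]\le 2\EE[(p^{(1)})^2+(p^{(2)})^2]$, and both pieces are bounded by $c\,\EE[\tau-t_0]$ (the first via $|p^{(1)}|^2\le c^2T(\tau-t_0)$, the second via \Ito\ isometry). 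No regularity of $u$ is ever invoked; only the assumed smoothness of $g$.

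Your approach replaces $g$ by $u$ so that the drift term vanishes and $M_{t\wedge\tau}$ is a genuine martingale. This is conceptually cleaner, and your tower-property argument for the martingale property is valid. The difficulty you correctly flag is identifying $\D\langle M\rangle_s$ when $u$ is only Lipschitz. Your proposed mollification fix is not quite complete as stated: for smooth $u_\delta$, the process $M_\delta$ is \emph{not} a martingale, and its drift involves $\partial_t u_\delta+\tfrac12\mathrm{tr}(b^TH_{u_\delta}b)+(\nabla u_\delta)^Ta-Vu_\delta+f$, which contains second derivatives of $u_\delta$ that are not uniformly bounded as $\delta\to 0$ under Assumption~1 alone. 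One can rescue this via a generalized \Ito--Krylov formula for $W^{1,\infty}$ functions of nondegenerate diffusions, but that is heavier machinery than the paper needs.

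There is also a structural reason the paper prefers $g$ over $u$: Lemma~\ref{lemma:var} is only a warm-up for Theorem~\ref{thm:var}, the Euler--Maruyama analogue, which is what the main analysis actually uses. In the discrete setting the analogue of $u$ is $\hu$, and there is no regularity available for $\hu$ at all---whereas $g$ is still $C^{2,1}$ and a discrete Taylor expansion of $g(\hX_{t_{n+1}})-g(\hX_{t_n})$ goes through directly. So the paper's choice is not merely simpler here; it is the one that transfers to the discretised problem.
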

Note: $\VV^{x_0,t_0}$ represents the variance conditional on $X_{t_0}\!=\!x_0$.
\begin{proof}
By \Ito calculus,
%\begin{eqnarray*}
%\D \left( E(t_0,s)\, g(X_s,s) \right) 
%&=& -\, V(X_s,s)\, E(t_0,s)\, g(X_s,s)\, \D s  \\
%&&  +\, E(t_0,s)\,\left( \dot{g}\, \D s + (\nabla g)^T\, ( a\, \D s + b \, \D W%_s) \right)\\
%&&  +\, \fracs{1}{2}\, \mbox{trace}(b^T H_g\, b) \, \D s,
%\end{eqnarray*}
\begin{eqnarray*}
\lefteqn{ \D \left( \rule{0in}{0.16in} E(t_0,s)\, g(X_s,s) \right) = } && \\
&& E(t_0,s) \left( \rule{0in}{0.16in} 
 \left(-\, V\, g + \dot{g} + (\nabla g)^T\, a
+\, \fracs{1}{2}\, \mbox{trace}(b^T H_g\, b)\right) \, \D s
+  (\nabla g)^T\, b \, \D W_s \right),
\end{eqnarray*}
with $a$, $b$, $g$, $\dot{g}\equiv \partial g/\partial t$, $\nabla g$, 
the spatial gradient of $g$, and $H_g$, the Hessian of $g$, 
all evaluated at $(X_s,s)$.  Hence,
\[
P_{t_0} - g(x_0,t_0) = p^{(1)} + p^{(2)},
\]
where
\begin{eqnarray*}
p^{(1)} &=& \int_{t_0}^\tau E(t_0,s) \left(
f - V\, g + \dot{g} + (\nabla g)^T a
+\, \fracs{1}{2}\, \mbox{trace}(b^T H_g\, b) 
\right) \D s,
\\
p^{(2)} &=&  \int_{t_0}^\tau E(t_0,s) \, (\nabla g)^T\, b \ \D W_s.
\end{eqnarray*}

We then have
\[
\VV^{x_0,t_0}[ P_{t_0} ]
\ \leq\  
\EE^{x_0,t_0}[ (P_{t_0}\!-\!g(x_0,t_0))^2 ]
\ \leq\  
2\, \EE^{x_0,t_0}[ (p^{(1)})^2 +  (p^{(2)})^2 ].
\]
Since $E(t_0,s)\leq \exp(T \|V\|_\infty)$, and every other term
in the integrand for $p^{(1)}$ is similarly bounded, there exists 
a constant $c$, independent of $W, x_0, t_0$, such that
$
\left| p^{(1)} \right| \leq c \ (\tau\!-\!t_0)  \leq c \ T
$
and hence
\[
\EE^{x_0,t_0} [ (p^{(1)})^2 ] \leq  c^2\, T\ 
\EE^{x_0,t_0} [ \tau\!-\!t_0  ].
\]
In addition,
\begin{eqnarray*}
\EE^{x_0,t_0}[ (p^{(2)})^2 ] &=& 
\EE^{x_0,t_0}\left[ \int_{t_0}^\tau (E(t_0,s))^2\ \| (\nabla g)^T\, b \|_2^2 \ \D s
\right] \\
& \leq & 
\exp(2 T \|V\|_\infty)\ \| \nabla g \|^2_{2,\infty} \| b \|^2_{2,\infty}
\ \EE^{x_0,t_0}[ \tau\!-\!t_0  ],
\end{eqnarray*}
where $\| b \|_{2,\infty}$,  $\| \nabla g \|_{2,\infty}$ are the 
maximum values of $\| b \|_2$, $\| \nabla g \|_2$ over $D\times [0,T]$. 

This completes the proof.
\end{proof}

\section{Euler-Maruyama approximation}

Using a uniform timestep of size $h$, the standard Euler-Maruyama
approximation of the SDE is
\[
\hX_{t_{n+1}} = \hX_{t_n} + a(\hX_{t_n}, t_n) \, h +  b(\hX_{t_n}, t_n) \, \Delta W_n,
\]
where $\Delta W_n \!\equiv\! W_{t_{n+1}}\!-\!W_{t_n}$ and 
each component of $\Delta W_n$ is a $N(0,h)$ i.i.d.~random variable.
Let $\hX_t$ be the piecewise-constant interpolation in which $\hX_t$ is taken 
to be constant on the open-ended time interval $[t_n, t_{n+1})$.
Correspondingly, for any time interval $[t_0, t]$ with $t_0\!=\!nh$ 
for some integer $n$, we can define
\[
\hE(t_0,t) = \exp\left( - \int_{t_0}^{t} V(\hX_s,s) \, \D s \right),
\]
and let $\hu(x_0,t_0)= \EE^{x_0,t_0}[ \hP_{t_0} ]$ 
where now the suffix on $\EE^{x_0,t_0}$ indicates that the expectation 
is conditional on $\hX_{t_0}\!=\!x_0$,
and $\hP_{t_0}$ is defined by
\[
\hP_{t_0} = \int_{t_0}^{\htau} \hE(t_0,s)\, f(\hX_s,s) \, \D s
\ +\ \hE(t_0,\htau)\, g(\hX_{\htau},\htau),
\]
with $\htau = \min(T, \min \{t_n : \hX_{t_n} \!\notin\! D \})$
being the exit time of the numerical approximation.

$\hu_{\exit}(x_0,t_0)$ can be defined analogously as the expected exit 
time for the numerical path approximation starting from $\hX_{t_0}\!=\!x_0$.

This Euler-Maruyama discretisation is the basis for the MLMC method 
proposed in this paper.  In discussing the computational complexity of
the algorithm, we make the following assumption:

\vspace{0.1in}

{\bf Assumption 2}: There is a unit computational cost in performing
one timestep of the Euler-Maruyama discretisation, and in determining 
whether or not $\hX_{t_{n+1}} \!\in\! D$.

\vspace{0.1in}

Comment: the main point in this assumption is that the cost does not 
depend on the timestep $h$, or the overall accuracy $\eps$ to be achieved.
Regarding the dependence of the cost on the dimension $d$, the actual 
computing time required to generate
the Brownian increments $\Delta W_n$ for one timestep will be proportional 
to $d$, while the computation of $b(\hX_{t_n}, t_n)$ and the product
$b(\hX_{t_n}, t_n)\, \Delta W_n$ will have a cost proportional to $d\, d'$
if $b(x,t)$ is dense.

Determining whether or not $\hX_{t_{n+1}} \!\in\! D$ may not be easy in practice.
It obviously depends on the way in which $D$, or its boundary $\partial D$, 
is specified.  If $\partial D$ is specified as a collection of patches, 
then a generalised octree data structure may be required to minimise the 
searching required to determine which boundary patches a point is near to,
and whether the move from $\hX_{t_n}$ to $\hX_{t_{n+1}}$ has crossed any 
patches.  This cost will actually reduce slightly as $h\!\rightarrow\! 0$ 
since the jumps $\hX_{t_{n+1}} \!-\! \hX_{t_n}$ will become smaller and so less
searching is likely to be required.

%\newpage

Standard theory on the numerical analysis of the Euler-Maruyama 
discretisation \cite{muller02} gives the following strong
convergence result.

\begin{lemma}
\label{lemma:strong}
For $q\!\geq\! 1$ we have
\[
\EE^{x_0,0}\left[ \sup_{[0,T]} \| X_t \!-\! \hX_t \|^q \right]^{1/q}
= \ O( h^{1/2} |\log h|^{1/2}).
\]
\end{lemma}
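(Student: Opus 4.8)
The plan is to route the estimate through the \emph{continuous} Euler--Maruyama interpolant and then pay separately for replacing it by the piecewise-constant interpolant $\hX_t$; it is only this second step that produces the logarithmic factor.

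First I would introduce $\overline{X}_t$, the continuous interpolant defined on each $[t_n,t_{n+1}]$ by $\overline{X}_t = \hX_{t_n} + a(\hX_{t_n},t_n)(t-t_n) + b(\hX_{t_n},t_n)(W_t-W_{t_n})$, which coincides with $\hX_t$ at the grid points and with the piecewise-constant interpolant at the left endpoint $t=t_n$. Since $a$ and $b$ are globally Lipschitz (hence of linear growth) and $X_0 = x_0$ is deterministic, all moments of $\sup_{[0,T]}\|X_t\|$ and of $\sup_{[0,T]}\|\overline{X}_t\|$ are finite uniformly in $h$, and the standard $L^q$ strong-convergence theory for the uniform Euler--Maruyama approximation (\cite{muller02}) gives, for every $q\geq1$,
\[
\EE^{x_0,0}\!\left[ \sup_{[0,T]} \| X_t - \overline{X}_t \|^q \right]^{1/q} = O(h^{1/2}).
\]
If a self-contained derivation is preferred, this is obtained by writing the integral equation for $X_t-\overline{X}_t$, splitting each drift and diffusion increment into a propagated part bounded by $\|X_s-\overline{X}_s\|$ and a one-step part whose $L^q$ size at a fixed time is $O(h^{1/2})$ (only the value at one time, not a supremum over the step, is needed here, so no logarithm enters), and then combining the Burkholder--Davis--Gundy and Gronwall inequalities.

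Next I would estimate $\sup_{[0,T]}\|\overline{X}_t-\hX_t\|$. On $[t_n,t_{n+1})$ we have $\overline{X}_t-\hX_t = a(\hX_{t_n},t_n)(t-t_n)+b(\hX_{t_n},t_n)(W_t-W_{t_n})$, so
\[
\sup_{[0,T]}\|\overline{X}_t-\hX_t\| \ \leq\ h\,\max_n\|a(\hX_{t_n},t_n)\| \ +\ \bigl(\max_n\|b(\hX_{t_n},t_n)\|\bigr)\,\max_n \sup_{t\in[t_n,t_{n+1}]}\|W_t-W_{t_n}\|.
\]
The $L^q$ norms of $\max_n\|a(\hX_{t_n},t_n)\|$ and $\max_n\|b(\hX_{t_n},t_n)\|$ are bounded uniformly in $h$, by the linear growth of $a,b$ together with the uniform-in-$h$ moment bounds on $\sup_{[0,T]}\|\overline{X}_t\|$; hence the first term is $O(h)$ in every $L^q$, and after a H\"older split of the product it remains to control $\EE[\,(\max_n \sup_{[t_n,t_{n+1}]}\|W_t-W_{t_n}\|)^q\,]^{1/q}$. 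The increments $(W_t-W_{t_n})_{t\in[t_n,t_{n+1}]}$ over distinct blocks are i.i.d., each $\sup_{[t_n,t_{n+1}]}\|W_t-W_{t_n}\|$ equals $h^{1/2}$ times a fixed random variable with Gaussian-type tails (reflection principle, coordinatewise), and a union bound over the $N=T/h$ blocks followed by integration of the resulting exponential tail bound gives $\EE[\,(\max_n\sup_{[t_n,t_{n+1}]}\|W_t-W_{t_n}\|)^q\,]^{1/q} = O(h^{1/2}(\log N)^{1/2}) = O(h^{1/2}|\log h|^{1/2})$. Therefore $\EE^{x_0,0}[\sup_{[0,T]}\|\overline{X}_t-\hX_t\|^q]^{1/q} = O(h^{1/2}|\log h|^{1/2})$, and combining this with the previous display via Minkowski's inequality yields the asserted rate.

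The main obstacle is precisely this last estimate: showing that the maximum over $N\sim h^{-1}$ timesteps of the Brownian modulus of continuity is $O(h^{1/2}|\log h|^{1/2})$ and no worse. Everything else is either quoted standard theory or routine moment bookkeeping; the content is the interplay between the Gaussian tail of $\sup_{[0,h]}\|W_t\|$ and the union bound over the $h^{-1}$ blocks, which converts the typical one-step scale $h^{1/2}$ into $h^{1/2}(\log h^{-1})^{1/2}$.
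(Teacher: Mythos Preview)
The paper does not prove this lemma; it simply states it as a consequence of ``standard theory on the numerical analysis of the Euler--Maruyama discretisation'' and cites \cite{muller02}. Your argument is correct and is in fact the standard route to this bound: the continuous Euler--Maruyama interpolant $\overline{X}_t$ satisfies the log-free sup-norm rate $O(h^{1/2})$, and the passage from $\overline{X}_t$ to the piecewise-constant $\hX_t$ picks up the extra $|\log h|^{1/2}$ factor via the maximum of $N\sim h^{-1}$ i.i.d.\ sub-Gaussian Brownian oscillations, exactly as you describe. One minor point worth making explicit: when you H\"older-split the product $\bigl(\max_n\|b(\hX_{t_n},t_n)\|\bigr)\cdot\bigl(\max_n\sup_{[t_n,t_{n+1}]}\|W_t-W_{t_n}\|\bigr)$, you need the Brownian-maximum bound in $L^{q'}$ for some $q'>q$, but your tail-integration argument gives every $L^{q'}$ simultaneously, so this is harmless.
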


The following corollary bounds the step changes in the Euler-Maruyama 
solution.

\begin{corollary}
\label{cor:strong}
For $q\!\geq\! 1$ we have 
\[
\EE^{x_0,0}\left[ \sup_{[0,T]} \| \hX_t \!-\! \lim_{s\rightarrow t-} \hX_s\|^q \right]^{1/q}
=\ O( h^{1/2} |\log h|^{1/2}).
\]
\end{corollary}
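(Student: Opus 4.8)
The plan is to reduce the statement to Lemma~\ref{lemma:strong} by comparing the piecewise-constant Euler--Maruyama path $\hX$ with the continuous true solution $X$. First note that $\hX_t$ is constant on each half-open interval $[t_n,t_{n+1})$, so the jump $\hX_t - \lim_{s\rightarrow t-}\hX_s$ is zero except at the grid points $t_n$ ($n\geq 1$), where it equals the step increment $\hX_{t_n}-\hX_{t_{n-1}}$. Hence $\sup_{[0,T]}\|\hX_t-\lim_{s\rightarrow t-}\hX_s\|$ is simply $\max_{1\leq n\leq N}\|\hX_{t_n}-\hX_{t_{n-1}}\|$, with $N=T/h$.

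The key step is to rewrite this increment through $X$ rather than through the Euler--Maruyama update formula, namely $\hX_{t_n}-\hX_{t_{n-1}} = (\hX_{t_n}-X_{t_n}) + (X_{t_n}-\hX_{t_{n-1}})$, and to bound the two terms separately. The first is bounded pathwise by $\sup_{[0,T]}\|\hX_t-X_t\|$. For the second, continuity of $X$ gives $X_s\rightarrow X_{t_n}$ as $s\uparrow t_n$, while $\hX_s=\hX_{t_{n-1}}$ throughout $[t_{n-1},t_n)$, so $X_{t_n}-\hX_{t_{n-1}} = \lim_{s\uparrow t_n}(X_s-\hX_s)$ and therefore $\|X_{t_n}-\hX_{t_{n-1}}\| \leq \sup_{[0,T]}\|X_t-\hX_t\|$ as well. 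Combining the two bounds, $\max_{1\leq n\leq N}\|\hX_{t_n}-\hX_{t_{n-1}}\| \leq 2\sup_{[0,T]}\|X_t-\hX_t\|$ holds pathwise; taking $L^q$ norms and invoking Lemma~\ref{lemma:strong} then yields the claimed $O(h^{1/2}|\log h|^{1/2})$ bound.

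The only subtlety --- and the reason this is genuinely a corollary rather than an immediate consequence --- is why one should not simply expand the increment as $a(\hX_{t_{n-1}},t_{n-1})\,h + b(\hX_{t_{n-1}},t_{n-1})\,\Delta W_{n-1}$. Doing so leaves the Brownian term, whose supremum over the $N$ timesteps is not controlled by the strong error but needs a maximal inequality for the maximum of $N$ essentially Gaussian vectors of scale $h^{1/2}$, which is itself of size $O(h^{1/2}(\log N)^{1/2})=O(h^{1/2}|\log h|^{1/2})$. Expressing the increment in terms of $X$ avoids this because the continuous path has no jumps and the supremum $\sup_{[0,T]}\|X_t-\hX_t\|$ already captures the discrepancy between $X$ near the right end of a timestep and the frozen value $\hX_{t_{n-1}}$. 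If one prefers to keep the expansion, the fallback route is exactly this: bound $\|a(\hX_{t_{n-1}},t_{n-1})\|$ and $\|b(\hX_{t_{n-1}},t_{n-1})\|$ in $L^q$ uniformly in $h$ and $n$ (using $h$-uniform moment bounds on $\hX$), apply Cauchy--Schwarz to separate off $\max_n\|\Delta W_{n-1}\|$, and then invoke the standard Gaussian maximal inequality, with the $|\log h|^{1/2}$ factor now arising at this point rather than being inherited from Lemma~\ref{lemma:strong}.
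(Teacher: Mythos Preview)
Your argument is correct and is essentially the paper's own proof: the paper writes $\|\hX_t-\hX_s\|\leq\|X_t-\hX_t\|+\|X_s-\hX_s\|+\|X_t-X_s\|$, lets $s\rightarrow t-$ so that the last term vanishes by continuity of $X$, and then bounds each of the remaining two terms by $\sup_{[0,T]}\|X_t-\hX_t\|$, which is exactly your two-term decomposition (your second bound $\|X_{t_n}-\hX_{t_{n-1}}\|\leq\sup\|X-\hX\|$ is the limit of the paper's $\|X_s-\hX_s\|$ term). The extra discussion of the direct expansion via the Brownian increment and Gaussian maximal inequalities is a valid alternative but not needed here.
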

\begin{proof}
$
\| \hX_t - \hX_s \| \leq \| X_t - \hX_t \| + \| X_s - \hX_s \| + \| X_t - X_s \|,
$
and then the result follows by taking the limit $s\rightarrow t-$ and using
the result from the previous lemma.
\end{proof}

The final assumption concerns the weak error due to the Euler-Maruyama
approximation:

\vspace{0.1in}

{\bf Assumption 3}:
There exist constants $c_u$, and $c_{\exit}$ such that
for all $x\!\in\! D, 0\!<\!t\!<\!T$
\begin{eqnarray*}
\left| u(x,t) \!-\! \hu(x,t) \right| & \leq & c_u\ h^{1/2} \\[0.05in]
\left| u_{\exit}(x,t) \!-\! \hu_{\exit}(x,t) \right| & \leq & c_{\exit}\ h^{1/2}
\end{eqnarray*}

\vspace{0.1in}

Comment: Gobet and Menozzi \cite{gm10} have proved this is true given
Assumption 1 and conditions on the smoothness of the boundary $\partial D$.
See also the related matched asymptotic analysis by Howison and Steinberg 
\cite{hs07}, and a new paper by Bouchard, Geiss and Gobet \cite{bgg17}
which analyses the expected $L_1$ error in the exit time, under 
certain conditions.

Under these conditions, based on an original idea due to Broadie, 
Glasserman \& Kou \cite{bgk97}, Gobet and Menozzi go on to develop a 
numerical discretisation with an improved first order weak convergence, 
by defining the path to have exited the domain at time $t_n$ if 
\underline{either} $\hX_{t_n} \!\notin\!D$, 
\underline{or} $\hX_{t_n} \!\in\!D$ and 
\[
%d(\hX_{t_n}) \ \equiv \ 
\| \hX_{t_n} \!-\! \pi(\hX_{t_n}) \|_2 
\ \leq \ 
c_0 \, \| n^T(\pi(\hX_{t_n}))\, b(\hX_{t_n}) \|_2 \, h^{1/2}
\]
where $\pi(x) \equiv \arg\min_{y\in\partial D}\| x \!-\! y \|_2$ 
is the projection of $x$ onto the boundary $\partial D$, 
$n(x)$ is the unit normal on the boundary, 
and $c_0 \!=\! -\zeta(1/2)/\sqrt{2 \pi} \!\approx\! 0.5826$
(see (2.1) in \cite{gm10}).

%\newpage

The Lipschitz assumption for $u_{\exit}$, together with the 
bound in Assumption 3, leads immediately to the following 
bound on the expected exit time of numerical path simulations,
based on the initial distance from the boundary:
\begin{lemma}
\label{lemma:exit}
Under Assumptions 1 and 3, for any $x_0\!\in\! D$, 
$t_0\!=\!nh < T$ for some integer $n$, and $y\!\in\! \partial D$,
\[
\EE^{x_0,t_0}\left[\, \htau \!-\! t_0 \, \right] \leq 
L_{\exit} \left( \| y \!-\! x_0 \|_2 + | s \!-\! t_0 | \right)
\ +\ c_{\exit}\, h^{1/2}
\]
\end{lemma}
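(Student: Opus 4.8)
The plan is to recast $\EE^{x_0,t_0}[\,\htau\!-\!t_0\,]$ as $\hu_{\exit}(x_0,t_0)-t_0$, use Assumption~3 to replace the numerical functional $\hu_{\exit}$ by the exact one $u_{\exit}$ at the cost of a $c_{\exit}\,h^{1/2}$ term, and then bound $u_{\exit}(x_0,t_0)-t_0$ by combining the Lipschitz property of $u_{\exit}$ (Assumption~1) with the Dirichlet boundary condition $u_{\exit}\!=\!g_{\exit}$ on $\partial D$. Since the right-hand side of the claimed inequality is smallest when $s\!=\!t_0$, it will suffice to prove the bound with $s$ replaced by $t_0$; the statement for a general $s$ then follows immediately.

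First I would note that, by the definition of $\hu_{\exit}$ as the expected exit time of the numerical path started from $\hX_{t_0}\!=\!x_0$, one has $\EE^{x_0,t_0}[\,\htau\!-\!t_0\,]=\hu_{\exit}(x_0,t_0)-t_0$. Next, Assumption~3 gives $\hu_{\exit}(x_0,t_0)\le u_{\exit}(x_0,t_0)+c_{\exit}\,h^{1/2}$, which is legitimate because $x_0\in D$ and $0<t_0<T$. Then I would apply the Lipschitz estimate of Assumption~1 with the boundary point $(y,t_0)$ and use that $u_{\exit}(y,t_0)=g_{\exit}(y,t_0)=t_0$ for $y\in\partial D$, $0<t_0<T$, to obtain $u_{\exit}(x_0,t_0)-t_0 \le L_{\exit}\,\|y\!-\!x_0\|_2$. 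Chaining these three inequalities yields $\EE^{x_0,t_0}[\,\htau\!-\!t_0\,]\le L_{\exit}\,\|y\!-\!x_0\|_2 + c_{\exit}\,h^{1/2} \le L_{\exit}\big(\|y\!-\!x_0\|_2+|s\!-\!t_0|\big) + c_{\exit}\,h^{1/2}$.

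The chain of inequalities is routine; the one point that will need care is the appeal to Assumption~1 at a point of $\partial D$, since Assumption~1 is literally stated for $(x,t),(y,s)\in D\times[0,T]$ with $D$ open. This is resolved by the observation that, under the boundary-smoothness hypotheses already invoked for Assumption~3, $u_{\exit}$ is continuous on $\overline{D}\times[0,T]$ and attains its Dirichlet data there, so the Lipschitz inequality extends to $y\in\partial D$ by a limiting argument. One also needs $t_0\in(0,T)$ for the boundary condition and Assumption~3 to apply, which holds whenever $n\ge 1$, the case $t_0=0$ being recovered by continuity. Beyond this bookkeeping I do not anticipate any genuine obstacle.
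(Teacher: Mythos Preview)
Your proposal is correct and is exactly the argument the paper has in mind: the paper gives no proof beyond asserting that the bound ``follows immediately'' from the Lipschitz property of $u_{\exit}$ in Assumption~1 combined with the weak-error bound in Assumption~3, and your chain $\hu_{\exit}(x_0,t_0)\to u_{\exit}(x_0,t_0)\to u_{\exit}(y,t_0)=t_0$ is precisely how those two ingredients combine. Your observation that the undeclared variable $s$ can be taken equal to $t_0$ without loss, and your remark about extending the Lipschitz inequality to $y\in\partial D$ by continuity, are both appropriate clarifications of points the paper leaves implicit.
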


This lemma is important because of the following theorem:

\begin{theorem}
\label{thm:var}
There exists a constant $c$ such that for any $x_0\in D$
with $t_0\!=\!nh < T$ for some integer $n$,
\[
\VV^{x_0,t_0}[ \hP_{t_0} ] \leq 
c \ \EE^{x_0,t_0}[\, \htau \!-\! t_0 ].
\]
\end{theorem}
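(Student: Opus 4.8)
The plan is to mimic the proof of Lemma~\ref{lemma:var}, but to handle the fact that the discrete path $\hX$ is only piecewise constant and that $\htau$ is only attained at grid points. First I would apply \Ito calculus to the process $\hE(t_0,s)\, g(\hX_s,s)$ on each open subinterval $[t_n,t_{n+1})$, noting that on such an interval $\hX_s \equiv \hX_{t_n}$ is frozen while $W_s$ continues to move, so that
\begin{eqnarray*}
\lefteqn{ \D\left( \rule{0in}{0.16in} \hE(t_0,s)\, g(\hX_s,s) \right) = } && \\
&& \hE(t_0,s) \left( \rule{0in}{0.16in}
\left( -\, V\, g + \dot{g} + (\nabla g)^T\, a(\hX_{t_n},t_n)
+ \fracs{1}{2}\, \mbox{trace}(b^T H_g\, b) \right) \D s
+ (\nabla g)^T\, b(\hX_{t_n},t_n) \, \D W_s \right),
\end{eqnarray*}
with $V$, $g$, $\dot g$, $\nabla g$, $H_g$ evaluated at $(\hX_s,s)=(\hX_{t_n},s)$ and the drift/diffusion coefficients frozen at the left endpoint. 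Summing over the subintervals making up $[t_0,\htau)$ gives a telescoping decomposition $\hP_{t_0} - g(x_0,t_0) = \hp^{(1)} + \hp^{(2)}$, exactly analogous to $p^{(1)}$ and $p^{(2)}$, with $\hp^{(1)}$ a time integral of uniformly bounded integrands and $\hp^{(2)}$ an \Ito integral against $\D W_s$ of a uniformly bounded integrand (here I use that $g\in C^{2,1}$ with bounded $H_g$ and $\dot g$, that $V$, $\nabla g$ are bounded, and that $a$, $b$ are bounded on $D\times[0,T]$ by their Lipschitz property together with boundedness of $D$).

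Next I would estimate the two pieces as in Lemma~\ref{lemma:var}. Using $\hE(t_0,s)\leq \exp(T\|V\|_\infty)$ and the bound on the integrand, $|\hp^{(1)}| \leq c\,(\htau-t_0) \leq c\,T$, hence $\EE^{x_0,t_0}[(\hp^{(1)})^2] \leq c^2 T\, \EE^{x_0,t_0}[\htau-t_0]$. For the stochastic part, the \Ito isometry gives
\[
\EE^{x_0,t_0}[(\hp^{(2)})^2] = \EE^{x_0,t_0}\!\left[ \int_{t_0}^{\htau} (\hE(t_0,s))^2\, \| (\nabla g)^T b \|_2^2 \, \D s \right]
\leq \exp(2T\|V\|_\infty)\, \|\nabla g\|_{2,\infty}^2\, \|b\|_{2,\infty}^2\, \EE^{x_0,t_0}[\htau - t_0].
\]
Then $\VV^{x_0,t_0}[\hP_{t_0}] \leq \EE^{x_0,t_0}[(\hP_{t_0}-g(x_0,t_0))^2] \leq 2\,\EE^{x_0,t_0}[(\hp^{(1)})^2 + (\hp^{(2)})^2]$ yields the claimed bound, with the constant $c$ depending only on $T$, $\|V\|_\infty$, $\|f\|_\infty$, $\|b\|_{2,\infty}$, and the $C^{2,1}$ norms of $g$, none of which depend on $h$, $x_0$ or $t_0$.

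The step I expect to be the main obstacle is making the telescoping rigorous at the level of stochastic integrals: one must justify that $\htau$ is a stopping time for the discrete filtration, that the optional stopping / localisation needed to apply the \Ito isometry up to the random time $\htau$ is valid (here the boundedness of $\htau$ by $T$ is what makes this painless), and — most delicately — that the \Ito integral built piece-by-piece over the open subintervals $[t_n,t_{n+1})$ genuinely coincides with a single \Ito integral against $\D W_s$ on $[t_0,\htau)$ with a predictable, uniformly bounded integrand, despite the discontinuities of $s\mapsto \hX_s$ at the grid points (the integrand has only countably many jumps, so this is a technicality, but it should be stated). A secondary point worth a sentence is why the coefficients $a(\hX_{t_n},t_n)$, $b(\hX_{t_n},t_n)$ are uniformly bounded: this follows because $\hX_{t_n}\in D$ for $t_n<\htau$, $D$ is bounded, and $a$, $b$ are Lipschitz hence bounded on $\overline D\times[0,T]$. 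Everything else parallels Lemma~\ref{lemma:var} verbatim.
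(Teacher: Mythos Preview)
Your \Ito formula is inconsistent with the piecewise-constant interpolation the paper uses. On $[t_n,t_{n+1})$ the path $\hX_s$ is frozen at $\hX_{t_n}$, so $s\mapsto g(\hX_s,s)=g(\hX_{t_n},s)$ has no spatial variation whatsoever: its differential is simply $\dot g(\hX_{t_n},s)\,\D s$, with no $(\nabla g)^T a$ term, no Hessian term, and crucially no $\D W_s$ term. Your $\hp^{(2)}$ therefore does not arise from this calculation. Worse, the pieces do not telescope: at each grid point the process $\hE(t_0,s)\,g(\hX_s,s)$ jumps by $\hE(t_0,t_{n+1})\bigl(g(\hX_{t_{n+1}},t_{n+1})-g(\hX_{t_n},t_{n+1})\bigr)$, and it is precisely these jumps, through $\Delta\hX=a_n h+b_n\Delta W_n$, that carry all of the randomness.

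The paper deals with the jumps directly: it Taylor-expands each one to second order in $\Delta\hX$, which produces a three-term decomposition $p^{(1)}+p^{(2)}+p^{(3)}$. Here $p^{(1)}$ is the bounded-integrand time integral, $p^{(2)}=\sum_{n}{\bf 1}_n\,\hE(t_0,t_{n+1})(\nabla g)_n^T b_n\,\Delta W_n$ is a discrete martingale handled by orthogonality of the increments, and $p^{(3)}$ collects the drift and Hessian contributions from the Taylor remainder; bounding $\EE[(p^{(3)})^2]$ requires a separate argument using that $h^{-2}\|\Delta W_n\|_2^4$ has bounded mean independent of $h$.

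Your idea can be repaired, and then yields a route that is arguably cleaner than the paper's. Apply \Ito instead to $\hE(t_0,s)\,g(\widetilde X_s,s)$, where $\widetilde X$ is the \emph{continuous} Euler interpolant $\widetilde X_s=\hX_{t_n}+a_n(s\!-\!t_n)+b_n(W_s\!-\!W_{t_n})$ on each $[t_n,t_{n+1}]$. This process is an \Ito diffusion on every subinterval, has no jumps, and agrees with $\hX$ at all grid points (so $g(\widetilde X_{\htau},\htau)=g(\hX_{\htau},\htau)$). The formula you wrote then becomes correct, but with $\nabla g$ and $H_g$ evaluated at $(\widetilde X_s,s)$ rather than at the frozen point; the decomposition is genuinely $\hp^{(1)}+\hp^{(2)}$, the paper's awkward $p^{(3)}$ is absorbed into $\hp^{(1)}$, and the \Ito isometry bound goes through verbatim. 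The only extra requirement is that $\nabla g$ and $H_g$ be bounded wherever $\widetilde X_s$ can travel, i.e.\ outside $D$ as well --- but the paper's own Taylor expansion evaluates $H_{g,n}$ at $\hX_{t_n}+\xi\,\Delta\hX$, which may also lie outside $D$, so this is an implicit assumption shared by both arguments.
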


\begin{proof}
The proof is similar to that for Lemma \ref{lemma:var}.
If we define ${\bf 1}_n$ to be the indicator function 
for $t_n \!<\! \htau$, then 
by considering $\D(\hE(t_0,s)\, g(\hX_s,s))$ we obtain
\begin{eqnarray*}
\hE(t_0,\htau)\, g(\hX_\htau,\htau) - g(x_0,t_0) &=& 
\int_{t_0}^\htau \hE(t_0,s) \left(-V(\hX_s,s)\, g(\hX_s,s) + \dot{g}(\hX_s,s) \right)\ \D s
\\ &+& \sum_{n\geq 0} {\bf 1}_n \hE(t_0,t_{n+1}) 
\left( g(\hX_{t_{n+1}},t_{n+1}) - g(\hX_{t_n}, t_{n+1}) \right).
\end{eqnarray*}

If the function $F: \RR \rightarrow \RR$ is twice continuously
differentiable, then there exists some $0\!<\!\xi\!<\!1$ such that
$F(1) \!-\! F(0) \!=\! F'(0) \!+\! \fracs{1}{2} F''(\xi)$.  Applying
this to $F(s) \equiv g(\hX_{t_n} \!+\! s\,\Delta \hX, t_{n+1})$ 
with
$\Delta \hX \equiv \hX_{t_{n+1}} \!- \hX_{t_n} =
a(\hX_{t_n}, t_n)\, h + b(\hX_{t_n}, t_n) \, \Delta W_n$
gives
%Also, performing a truncated Taylor series expansion gives
\begin{eqnarray*}
 g(\hX_{t_{n+1}},t_{n+1}) - g(\hX_{t_n}, t_{n+1}) &=&
(\nabla g)_n^T ( a_n\, h + b_n \, \Delta W_n ) \\
&+& \fracs{1}{2}  ( a_n\, h + b_n \, \Delta W_n )^T H_{g,n}\, ( a_n\, h + b_n \, \Delta W_n ),
\end{eqnarray*}
where 
$a_n \!\equiv\! a(\hX_{t_n}, t_n)$,
$b_n \!\equiv\! b(\hX_{t_n}, t_n)$, 
$(\nabla g)_n \equiv \nabla g(\hX_{t_n}, t_{n+1})$, 
and $H_{g,n}$, the Hessian of $g$, 
is evaluated at $(\hX_{t_n}\!+\!\xi \Delta \hX, t_{n+1})$.

Hence,
\[
\hP_{t_0} - g(x_0,t_0) = p^{(1)} +  p^{(2)} +  p^{(3)},
\]
where
\begin{eqnarray*}
p^{(1)} &=& 
\int_{t_0}^\htau \hE(t_0,s)\, 
\left( f(\hX_s,s) - V(\hX_s,s)\, g(\hX_s,s) + \dot{g}(\hX_s,s) \right) \, \D s, \\
p^{(2)} &=& 
\sum_{n\geq 0} {\bf 1}_n \hE(t_0,t_{n+1})\, (\nabla g)^T_n b_n \, \Delta W_n, \\
p^{(3)} &=& 
\sum_{n\geq 0} {\bf 1}_n \hE(t_0,t_{n+1})
\left(  (\nabla g)^T_n a_n h 
+ \fracs{1}{2}  ( a_n h \!+\! b_n \Delta W_n )^T H_{g,n}\, 
( a_n h \!+\! b_n \Delta W_n ) \right).
\end{eqnarray*}

The variance therefore has the bound
\begin{eqnarray*}
\VV^{x_0,t_0}[ \hP_{t_0} ]
&\leq&
\EE^{x_0,t_0}\left[ (\hP_{t_0}\!-\!g(x_0,t_0))^2 \right] \\
&\leq&  
3\, \EE^{x_0,t_0}[ (p^{(1)})^2 + (p^{(2)})^2  + (p^{(3)})^2 ].
\end{eqnarray*}
Since the integrand for $p^{(1)}$ is bounded, as before, 
there exists a constant $c$,
independent of $W, x_0, t_0$ and $h$, such that
$
| p^{(1)} | \leq c \ (\htau\!-\!t_0)  \leq c \ T
$
and hence
\[
\EE^{x_0,t_0} [ (p^{(1)})^2 ] \leq  c^2\, T\ 
\EE^{x_0,t_0}[ \htau\!-\!t_0 ].
\]

In addition,
\begin{eqnarray*}
\EE^{x_0,t_0} [ (p^{(2)})^2 ] 
&=& \EE^{x_0,t_0}\left[
\sum_{n\geq 0} {\bf 1}_n (\hE(t_0,t_{n+1}))^2\ 
\left( (\nabla g)_n^T\, b_n \, \Delta W_n \right)^2 \right] \\
&=& \EE^{x_0,t_0}\left[
\sum_{n\geq 0} {\bf 1}_n (\hE(t_0,t_{n+1}))^2\ \| (\nabla g)_n^T\, b_n \|_2^2 \ h
\right] \\
& \leq & 
\exp(2 T \|V\|_\infty)\ \| \nabla g \|^2_{2,\infty} \| b \|^2_{2,\infty}
\ \EE^{x_0,t_0}[ \htau\!-\!t_0 ].
\end{eqnarray*}

Finally, we need to bound $\EE^{x_0,t_0}[ (p^{(3)})^2 ]$. 
There are at most $T/h$ 
timesteps before the computed path leaves the domain, and hence
\[
\EE^{x_0,t_0} [ (p^{(3)})^2 ] \leq 
\EE^{x_0,t_0} \left[ \sum_{n\geq 0} {\bf 1}_n h\, S_n \right]
\]
where 
\[
S_n = T h^{-2} (\hE(t_0,t_{n+1}))^2 
\left( (\nabla g)_n^T\, a_n h 
+ \fracs{1}{2}  ( a_n h \!+\! b_n \Delta W_n )^T H_{g,n}\, 
( a_n h \!+\! b_n \Delta W_n ) \right)^2.
\]
Repeatedly using the inequality $(u\!+\!v)^2 \leq 2(u^2\!+\!v^2)$, 
and bounding the various terms, gives 
${\bf 1}_n S_n \leq {\bf 1}_n S'_n$ where
\begin{eqnarray*}
S'_n &=& 2 \, T\, \exp(2 T \|V\|_\infty)
\left(\ \|\nabla g \|^2_{2,\infty}\, \| a\|^2_{2,\infty} 
\right.
\\ ~~~~ &&
\left. \hspace{1.25in}
+\, \|H_g\|^2_{2,\infty}  
(  T^2 \| a\|^4_{2,\infty} +
h^{-2}\, \| b \|^4_{2,\infty} \| \Delta W_n \|_2^4 )
\ \right).
\end{eqnarray*}
The $S'_n$ are i.i.d.~and have a finite expected value
which is independent of $h$.  In addition, for each $n$, 
$S'_n$ is independent of ${\bf 1}_n$. Hence
\[
\EE^{x_0,t_0} [ (p^{(3)})^2 ]\ \leq\ 
\EE^{x_0,t_0} \left[ \sum_{n\geq 0} {\bf 1}_n h\, S'_n \right]
\ =\ \EE[S'_0] \ \EE^{x_0,t_0}[ \htau-t_0 ].
\]
This completes the proof.
\end{proof}

%\newpage

\section{MLMC algorithm}

\label{sec:MLMC}

The multilevel Monte Carlo (MLMC) method is well documented 
\cite{giles08,giles15}.   The following is the 
version of the MLMC theorem given in \cite{giles15}:

\begin{theorem}
\label{thm:MLMC}
Let $P$ denote a random variable, and let $\hP_\ell$ denote the 
corresponding level $\ell$ numerical approximation.

If there exist independent estimators $Y_\ell$
each of which is an average of $N_\ell$ i.i.d.~Monte Carlo samples, 
each with expected cost $C_\ell$ and variance 
$V_\ell$, and positive constants 
$\alpha, \beta, \gamma, c_1, c_2, c_3$ such that 
$\alpha\!\geq\!\fracs{1}{2}\min(\beta,\gamma)$ and
\begin{itemize}
\item[i)] ~
$\displaystyle
\left|\rule{0in}{0.13in} \EE[\hP_\ell \!-\! P] \right|\ \leq\ c_1\, 2^{-\alpha\, \ell}
$
\vspace{0.02in}
\item[ii)] ~
$\displaystyle
\EE[Y_\ell]\ = \left\{ \begin{array}{ll}
\EE[\hP_0],                     &~~ \ell=0 \\[0.05in]
\EE[\hP_\ell \!-\! \hP_{\ell-1}], &~~ \ell>0
\end{array}\right.
$
\vspace{0.02in}
\item[iii)] ~
$\displaystyle
V_\ell\ \leq\ c_2\, 2^{-\beta\, \ell}
$
\vspace{0.02in}
\item[iv)] ~
$\displaystyle
C_\ell\ \leq\ c_3\, 2^{\gamma\, \ell},
$
\end{itemize}
then there exists a positive constant $c_4$ such that for any 
$\eps \!<\! e^{-1}$
there are values $L$ and $N_\ell$ for which the multilevel estimator
\[
Y = \sum_{\ell=0}^L Y_\ell,
\]
has a mean-square-error with bound
\[
MSE \equiv \EE\left[ \left(Y - \EE[P]\right)^2\right] < \eps^2
\]
with an expected computational complexity $C$ with bound
\[
\EE[C] \leq \left\{\begin{array}{ll}
c_4\, \eps^{-2}              ,    & ~~ \beta>\gamma, \\[0.05in]
c_4\, \eps^{-2} (\log \eps)^2,    & ~~ \beta=\gamma, \\[0.05in]
c_4\, \eps^{-2-(\gamma-\beta)/\alpha}, & ~~ \beta<\gamma.
\end{array}\right.
\]
\end{theorem}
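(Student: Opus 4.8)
The plan is to run the standard MLMC complexity argument: decompose the mean-square error into a squared-bias term and a variance term, fix the number of levels $L$ so the bias term is negligible, and then choose the per-level sample sizes $N_\ell$ by optimising total cost against the remaining variance budget. Since $Y=\sum_{\ell=0}^L Y_\ell$ is a sum of independent estimators, hypothesis (ii) gives $\EE[Y]=\EE[\hP_L]$ and $\VV[Y]=\sum_{\ell=0}^L V_\ell/N_\ell$ (writing $V_\ell$ for the variance of one level-$\ell$ sample), so
\[
\mathrm{MSE} = \bigl(\EE[\hP_L-P]\bigr)^2 + \sum_{\ell=0}^L V_\ell/N_\ell ,
\]
and by (i) the first term is at most $c_1^2\,2^{-2\alpha L}$. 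Taking $L=\lceil \alpha^{-1}\log_2(\sqrt2\,c_1/\eps)\rceil$ makes it $\le \eps^2/2$; this forces $L=O(|\log\eps|)$ and, tracking the ceiling, $2^{\gamma L}=O(\eps^{-\gamma/\alpha})$, a bound I will need later. (The hypothesis $\eps<e^{-1}$ merely ensures $L\ge 1$ and $|\log\eps|>1$.)

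For the variance I would spend the remaining budget $\eps^2/2$ across levels by the familiar Cauchy--Schwarz (Lagrange-multiplier) allocation: set $N_\ell=\lceil 2\eps^{-2}\sqrt{V_\ell/C_\ell}\,\sum_{k=0}^L\sqrt{V_kC_k}\,\rceil$. Then $N_\ell\ge 1$, the variance term is $\le\eps^2/2$ so $\mathrm{MSE}<\eps^2$, and
\[
\EE[C]=\sum_{\ell=0}^L N_\ell C_\ell \le 2\eps^{-2}\Bigl(\sum_{\ell=0}^L\sqrt{V_\ell C_\ell}\Bigr)^2 + \sum_{\ell=0}^L C_\ell .
\]
Inserting (iii)--(iv) gives $\sqrt{V_\ell C_\ell}\le\sqrt{c_2c_3}\,2^{(\gamma-\beta)\ell/2}$, so the geometric sum $\sum_{\ell=0}^L 2^{(\gamma-\beta)\ell/2}$ is $O(1)$, $O(L)$, or $O(2^{(\gamma-\beta)L/2})$ according as $\beta>\gamma$, $\beta=\gamma$, or $\beta<\gamma$; squaring it and using $2^L=O(\eps^{-1/\alpha})$ turns the first term of $\EE[C]$ into $O(\eps^{-2})$, $O(\eps^{-2}(\log\eps)^2)$, or $O(\eps^{-2-(\gamma-\beta)/\alpha})$ respectively, while the second term is always $O(2^{\gamma L})=O(\eps^{-\gamma/\alpha})$.

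The one genuinely substantive step --- and the place where $\alpha\ge\tfrac12\min(\beta,\gamma)$ is used --- is checking that the $O(\eps^{-\gamma/\alpha})$ ``at least one sample per level'' overhead never dominates. If $\beta\ge\gamma$ then $\min(\beta,\gamma)=\gamma$, so $\alpha\ge\gamma/2$ forces $\eps^{-\gamma/\alpha}=O(\eps^{-2})$; if $\beta<\gamma$ then $\min(\beta,\gamma)=\beta$, so $\alpha\ge\beta/2$ gives $\gamma/\alpha\le 2+(\gamma-\beta)/\alpha$ and hence $\eps^{-\gamma/\alpha}=O(\eps^{-2-(\gamma-\beta)/\alpha})$. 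In each regime the overhead is absorbed into the first term, which yields the stated three-way bound on $\EE[C]$ and completes the argument.

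I expect the main obstacle to be purely the bookkeeping: keeping the multiplicative constants hidden inside the two ceiling functions under control (so that $L$ and $\sum_\ell N_\ell C_\ell$ inherit the clean powers of $\eps$ above), and handling the $\gamma-\beta$ case split cleanly. There is no deep idea beyond the bias/variance decomposition together with the Cauchy--Schwarz-optimal sample allocation --- once those are in place, every subsequent step, including the role of the hypothesis $\alpha\ge\tfrac12\min(\beta,\gamma)$, is essentially forced.
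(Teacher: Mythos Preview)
The paper does not actually prove this theorem; it merely quotes it as ``the version of the MLMC theorem given in \cite{giles15}'' and moves on. Your argument is correct and is precisely the standard proof from that reference: bias/variance decomposition of the MSE, choice of $L$ to make the squared bias at most $\eps^2/2$, Lagrange-multiplier (equivalently Cauchy--Schwarz) allocation of $N_\ell$ to spend the remaining $\eps^2/2$ of variance, and a geometric-sum case split on the sign of $\gamma-\beta$, with the hypothesis $\alpha\ge\tfrac12\min(\beta,\gamma)$ used exactly as you indicate to absorb the $\sum_\ell C_\ell = O(\eps^{-\gamma/\alpha})$ ceiling overhead into the leading term.
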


In the setting of this paper,
\begin{equation}
P = \int_0^\tau E(0,s)\, f(X_s,s) \, \D s + E(0,\tau)\, g(X_\tau,\tau),
\label{eq:functional2}
\end{equation}
with each path starting from $X_0\!=\!x_0$,
and $\hP_\ell$ represents the corresponding numerical approximation 
on level $\ell$ using a uniform timestep of size $h_\ell$ defined 
by $h_\ell = K^{-\ell} \, h_0$, for some fixed $h_0\!>\!0$ and 
integer $K\!>\!1$.  Based on the observations in \cite{giles08}, 
we will use $K\!=\!4$ in the algorithm description and in the numerical 
experiments presented later, but the numerical analysis will allow
for the possibility of other values.

%\newpage

\begin{figure}

\begin{center}
\includegraphics{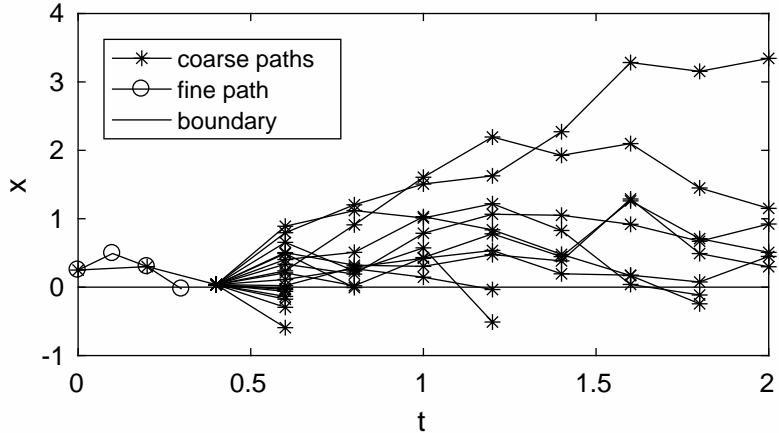}
\end{center}

\caption{Illustration of fine and coarse 1D Brownian paths 
with termination when they cross a boundary at $x\!=\!0$.
Note: the actual algorithm and analysis is based on a 
piecewise constant approximation, but a continuous piecewise 
linear interpolation is used here for visual clarity.}
\label{fig:problem}
\end{figure}

The question now is how to define $Y_\ell$?  Higham {\it et al}
\cite{hmrsy13} define it as the average of the differences in
numerical approximations on levels $\ell$ and $\ell\!-\!1$ 
from $N_\ell$ independent path simulations,
\[
Y_\ell = N_\ell^{-1} \sum_{n=1}^{N_\ell}
\left(\rule{0in}{0.16in} \hP_\ell(W^{(n)}) - \hP_{\ell-1}(W^{(n)})\right).
\]
Here the notation $\hP_\ell(W^{(n)}) - \hP_{\ell-1}(W^{(n)})$ 
indicates the difference of two numerical approximations with 
different timesteps but the same driving Brownian motion.  The 
use of the same Brownian path is key to the multilevel treatment; 
it is this which ensures that the variance $V_\ell$ decays to zero 
as $\ell\!\rightarrow\!\infty$.

However, the variance does not decay to zero as rapidly as 
one would hope.  The problem is illustrated in Figure 
\ref{fig:problem}.  The line with circles represents a fine 
Brownian path which after 3 timesteps crosses the boundary.  
The other line shows that after two coarse timesteps (each 
corresponding to two fine timesteps) the coarse path has
not crossed the boundary, but it is extremely close to the 
boundary.  Continuing the simulation from that point, the
multiple lines correspond to multiple independent simulations
of the extension of the Brownian path.  Many of these quickly
cross the boundary, but some fail to cross until much later.
In fact, speaking rather loosely, for any fixed time $T$ 
there is approximately an $O(h^{1/2})$ probability that the 
coarse path continues for more than time $T$ before crossing 
the boundary.
Higham {\it et al} \cite{hmrsy13} prove that as a consequence,
$V_\ell = O(h_\ell^{1/2} |\log h_\ell|^{1/2})$
and this leads to an MLMC computational complexity bound
which is $O(\eps^{-3} |\log \eps |^3)$.

Higham {\it et al} did not use a boundary correction, and 
therefore their approximation had only $O(h^{1/2})$ weak 
convergence. In the case of smooth boundaries $\partial D$,  
modifying their MLMC treatment to use Gobet \& Menozzi's 
boundary shift \cite{gm10} would improve the weak convergence 
to first order, and reduce the overall complexity to 
$O(\eps^{-2.5} |\log \eps |^3)$ by approximately halving the 
number of levels required to achieve the desired accuracy.

The new MLMC algorithm has the same % $O(\eps^{-2} |\log \eps |^{7/2})$
complexity regardless of whether it uses the standard boundary 
treatment or the Gobet \& Menozzi treatment.  The numerical 
analysis will be given in detail for the former, but has a natural 
extension to the latter.  The numerical results for both methods
indicate that the latter is $4-10 \times$ more efficient in practice.

\section{New MLMC algorithm}

\label{sec:new}

Figure \ref{fig:problem} illustrated not only the problem with 
the existing MLMC algorithm, but also the solution.  The key idea
in the new algorithm is that when simulating a pair of coarse and 
fine paths, driven by the same Brownian motion, once one path 
exits the domain the other path is split (or duplicated) into 
multiple copies, each driven from that point onwards by different 
independent Brownian path realisations.  Averaging the output from 
these different copies gives an approximation to the conditional 
expectation for that path, and the variance for the multilevel 
estimator based on the conditional expectation is much lower.
This is similar to the use of conditional expectations for 
digital options and barrier options in \cite{giles08b,giles15}.

By introducing the splitting, it is clear that the computational
cost increases.  This is important in deciding the number of split 
paths.  Ignoring factors proportional to $\log h_\ell$,
it will be shown that the split paths start at a distance
which is $O(h_\ell^{1/2})$ from the boundary, and 
therefore the expected exit time for each of the split paths
is $O(h_\ell^{1/2})$, due to Assumptions 1 and 3.  By making the 
number of split paths $O(h_\ell^{-1/2})$, the cost of the split paths
is $O(h_\ell^{-1})$, which is the same as the order of cost of 
the coarse and fine paths before the split.  Thus, the overall 
cost has been increased by some factor, but its order has not
changed.  It will be shown later, after the proof of Theorem 
\ref{thm:main}, that this number of split 
paths is the minimum required to reduce the multilevel variance 
to the same order as using the true conditional expectation, 
which is equivalent to using an infinite number of split paths.

\begin{algorithm}[t]
\caption{Algorithm to compute $Y_\ell$ using $N_\ell$ pairs of 
coarse/fine paths, with a split into $M_\ell$ sub-paths after one exits}
\label{alg:Yl}

\begin{algorithmic}
\FOR{$n = 1, \ldots, N_\ell$}
\STATE for given $W^{(n)}$, compute $\hX^{(n)}_{\ell}$ and
       $\hX^{(n)}_{\ell-1}$ 
using Euler-Maruyama discretisation with Brownian motion $W^{(n)}$
and timesteps $h_\ell, h_{\ell-1}$ respectively,
       until the end of the first coarse timestep
       for which at least one of the two has exited domain $D$
       or reached the terminal time $T$ \\[0.1in]
\IF{$\hX^{(n)}_{\ell}$ has exited $D$ or the terminal time $T$ has been reached}
\STATE $\oP_\ell^{(n)} := \hP_\ell(W^{(n)})$ based on path already computed
\ELSE
\STATE $\displaystyle \oP_\ell^{(n)} :=  M_\ell^{-1} \sum_{m=1}^{M_\ell} \hP_\ell(W^{(n)},Z^{(n,m)})$
       based on $M_\ell$ independent path\\ ~~~~~~~~ continuations using Brownian paths
       represented by $Z^{(n,m)}$
\ENDIF \\[0.1in]
\IF{$\hX^{(n)}_{\ell-1}$ has exited $D$ or the terminal time $T$ has been reached}
\STATE $\oP_{\ell-1}^{(n)} := \hP_{\ell-1}(W^{(n)})$ based on path already computed
\ELSE
\STATE $\displaystyle \oP_{\ell-1}^{(n)} := M_\ell^{-1} \sum_{m=1}^{M_\ell} \hP_{\ell-1}(W^{(n)},Z^{(n,m)})$
       based on $M_\ell$ independent path\\ ~~~~~~~~ continuations using Brownian paths
       represented by $Z^{(n,m)}$
\ENDIF \\[0.1in]
\ENDFOR \\[0.1in]
\STATE $\displaystyle Y_\ell := N_\ell^{-1} \sum_{n=1}^{N_\ell}  \left(\oP_\ell^{(n)} - \oP_{\ell-1}^{(n)} \right)$
\end{algorithmic}

\end{algorithm}

The algorithm to compute $Y_\ell$ is given in Algorithm \ref{alg:Yl}.
The timestep on level $\ell$ is taken to be $h_\ell \!=\! 4^{-\ell}\, h_0$.
For each of the $N_\ell$ samples, the fine and coarse paths are
computed using the same driving Brownian path $W^{(n)}$ until 
the end of the first coarse timestep for which at least one 
of the two paths has either left the domain $D$ or reached
the terminal time $T$. If the other path has not yet exited $D$
or reached time $T$, it is split into $M_\ell$ copies, and each is 
continued using independent Brownian paths represented by 
$Z^{(n,m)}$ until it finally exits the domain or reaches time $T$.  
The final value for $Y_\ell$ is given by
\[
Y_\ell = N_\ell^{-1} \sum_{n=1}^{N_\ell}
\left( 
\, \oP_\ell(W^{(n)}) \!-\! \oP_{\ell-1}(W^{(n)}) 
\right)
\]
where
$\displaystyle
\oP_\ell(W^{(n)}) = M_\ell^{-1} \sum_{m=1}^{M_\ell} \hP_\ell(W^{(n)},Z^{(n,m)})$,
and $\oP_{\ell-1}(W^{(n)})$ is defined similarly.
With a slight abuse of notation, each of the quantities 
$\hP_\ell(W^{(n)},Z^{(n,m)})$  
is the appropriate approximation of the Feynman-Kac functional 
(\ref{eq:functional2}) as defined in Section \ref{sec:MLMC},
and the averaging over $m$ is trivial for the path which did not 
split and therefore has no dependence on $Z^{(n,m)}$.

\section{Numerical analysis}

\subsection{Variance analysis}

For completeness, the numerical analysis begins with this general 
result for estimators with splitting.  This use of splitting is related 
to the Conditional Monte Carlo method (see pp.145--148 in \cite{ag07}) 
but quite different from the splitting technique in rare event simulation.

\begin{lemma}
\label{lemma:split}
If $W$ and $Z^{(1)}, \ldots ,Z^{(M)}$ are independent 
random variables such that $Z^{(1)}, \ldots,Z^{(M)}$ are identically 
distributed to a random variable $Z$, then
\[
\of = M^{-1} \sum_{m=1}^M f(W,Z^{(m)})
\]
is an unbiased estimator for
$\EE\left[ f(W,Z) \right]$ and its variance is 
\[
\VV[\,\of\,] = \VV\left[\rule{0in}{0.16in} \EE[f(W,Z) \,|\, W] \right]
+ M^{-1}\, \EE\left[\rule{0in}{0.16in} \VV[f(W,Z) \,|\, W] \right].
\]
\end{lemma}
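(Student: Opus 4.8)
The plan is to prove unbiasedness by linearity of expectation, and then to derive the variance identity from the law of total variance, conditioning on $W$. Throughout I assume implicitly that the relevant second moments are finite, so that every expression below is well defined.

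First I would observe that, since $W$ is independent of $Z^{(m)}$ and $Z^{(m)}$ has the same law as $Z$, the pair $(W,Z^{(m)})$ has the same joint distribution as $(W,Z)$ for every $m$; hence $\EE[f(W,Z^{(m)})] = \EE[f(W,Z)]$, and averaging over $m$ gives $\EE[\of] = \EE[f(W,Z)]$, which is the claimed unbiasedness. Next I would apply $\VV[\of] = \VV[\EE[\of \mid W]] + \EE[\VV[\of \mid W]]$ and treat the two terms in turn. For the conditional mean, linearity gives $\EE[\of \mid W] = M^{-1}\sum_{m=1}^M \EE[f(W,Z^{(m)}) \mid W]$; since each $Z^{(m)}$ is independent of $W$ and distributed as $Z$, each summand equals $\phi(W)$ with $\phi(w) := \EE[f(w,Z)]$, so $\EE[\of \mid W] = \EE[f(W,Z) \mid W]$ and the first term is exactly $\VV[\EE[f(W,Z)\mid W]]$.

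For the conditional variance I would use the key observation that, conditionally on $W = w$, the variables $f(w,Z^{(1)}), \ldots, f(w,Z^{(M)})$ are i.i.d.\ with common variance $\VV[f(w,Z)]$. Consequently $\VV[\of \mid W=w] = M^{-1}\VV[f(w,Z)] = M^{-1}\VV[f(W,Z)\mid W=w]$, and taking the expectation over $W$ produces the second term $M^{-1}\EE[\VV[f(W,Z)\mid W]]$. Summing the two contributions gives the stated formula.

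The only point that requires care — and the one I would spell out rather than gloss over — is this conditional-independence step: one must justify that conditioning on $W$ leaves the $f(W,Z^{(m)})$ i.i.d., which is precisely where the joint independence of $W$ from the entire family $(Z^{(1)},\ldots,Z^{(M)})$, rather than mere pairwise independence, is needed. Everything else is routine manipulation of conditional expectations.
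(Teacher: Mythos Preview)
Your proof is correct and follows essentially the same approach as the paper: both condition on $W$, use that the $f(W,Z^{(m)})$ are conditionally i.i.d.\ with variance $\VV[f(W,Z)\mid W]$, and then combine the pieces via the law of total variance (which the paper derives by hand via $\EE[\of^2\mid W]$ rather than invoking by name). Your added remark about needing joint independence of $W$ from the whole family $(Z^{(1)},\ldots,Z^{(M)})$ is a welcome clarification that the paper leaves implicit.
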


\begin{proof}
%The proof is similar to the analysis of stratified sampling in 
%Glasserman \cite{glasserman04}.  
Conditional on a value for $W$, the expected value of $\of$ is 
$\EE[f(W,Z) \,|\, W]$ and its variance is $M^{-1}\, \VV[f(W,Z) \,|\, W]$, 
and therefore
\[
\EE\left[\, \of^2 \,|\, W \right] = 
\left(\rule{0in}{0.16in} \EE[f(W,Z) \,|\, W] \right)^2
                 + M^{-1}\, \VV[f(W,Z) \,|\, W].
\]
Taking the expectation over the distribution for $W$ then gives
\begin{eqnarray*}
\EE[\,\of\,] &=& 
\EE \left[\rule{0in}{0.16in} \EE[f(W,Z)\,|\, W]  \right] 
\ = \ \EE[f(W,Z)], 
\\
\EE\left[\, \of^2 \,\right] &=& 
\EE\left[\rule{0in}{0.16in} \left( \EE[f(W,Z)\,|\, W] \right)^2 \right]
        + M^{-1}\, \EE\left[\rule{0in}{0.16in} \VV[f(W,Z)\,|\, W] \right],
\end{eqnarray*}
from which it follows that
\[
\VV[\,\of\,]
\ =\ \EE\left[\, \of^2 \,\right]
  - \left(\rule{0in}{0.16in} \EE[\,\of\,]\right)^2
= \VV\left[\rule{0in}{0.16in} \EE[f(W,Z) \,|\, W] \right]
    + M^{-1}\,\EE\left[\rule{0in}{0.16in} \VV[f(W,Z) \,|\, W] \right].
\]
\end{proof}

Applying this result to the multilevel estimator in Section 
\ref{sec:new} using $M_\ell$ split paths on level $\ell$, gives us
\begin{equation}
V_\ell =
\VV\left[ \oP_\ell \!-\! \oP_{\ell-1} \right]
= \VV\left[ \rule{0in}{0.16in} \, \EE[\hP_\ell \!-\! \hP_{\ell-1} \, | \, W] \, \right]
+ M_\ell^{-1} \, \EE\left[ \rule{0in}{0.16in} \, \VV[\hP_\ell \!-\! \hP_{\ell-1} \, | \, W] \, \right].
\label{eq:Vl}
\end{equation}
The main theorem will bound 
$\VV[\, \EE[\hP_\ell - \hP_{\ell-1} \, | \, W] \, ]$
and
$\EE[\, \VV[\hP_\ell - \hP_{\ell-1} \, | \, W] \, ]$,
and we will then bound $C_\ell$, the expected cost of each sample 
when using the splitting. To prepare for it, 
we begin by defining $\utau$ to be the exit time of the first of 
a pair of coarse/fine paths, and $\otau$ to be $\utau$ rounded 
up to the end of a coarse timestep (i.e.~a multiple of $h_{\ell-1}$),
which is the time at which the splitting occurs if one of the 
two paths remains within the domain.
Note that $\otau\!\neq\!\utau$ only when the fine path exits 
before the coarse path, part-way through a coarse timestep.

The following lemma bounds the difference between the coarse 
and fine paths up to time $\utau$, and also the difference between 
the fine path at time $\utau$ and the coarse path at the
possibly different time $\otau$.

\begin{lemma}
\label{lemma:diff}
Given the definitions above, for $q\!\geq\! 1$ we have
\[
\EE[\, \sup_{[0,\utau]} \| \hX_{\ell,t} \!-\! \hX_{\ell-1,t} \|^q \,]^{1/q}
= O(h_{\ell-1}^{1/2}\, |\log h_{\ell-1}|^{1/2})
\]
and
\[
\EE[\, \| \hX_{\ell,\utau} \!-\! \hX_{\ell-1,\otau} \|^q \,]^{1/q}
= O(h_{\ell-1}^{1/2}\, |\log h_{\ell-1}|^{1/2})
\]
\end{lemma}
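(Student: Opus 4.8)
The plan is to obtain the first bound from Lemma~\ref{lemma:strong} applied to \emph{both} timestep sizes $h_\ell$ and $h_{\ell-1}$ simultaneously, using the fact that the two Euler--Maruyama paths are driven by the same Brownian motion $W$. First I would note that
\[
\sup_{[0,\utau]} \| \hX_{\ell,t} \!-\! \hX_{\ell-1,t} \|
\ \leq\ \sup_{[0,T]} \| X_t \!-\! \hX_{\ell,t} \|
\ +\ \sup_{[0,T]} \| X_t \!-\! \hX_{\ell-1,t} \|,
\]
where $X_t$ is the exact SDE solution. Taking the $L^q$ norm, applying the triangle inequality in $L^q$, and invoking Lemma~\ref{lemma:strong} for each term gives a bound of order $h_\ell^{1/2}|\log h_\ell|^{1/2} + h_{\ell-1}^{1/2}|\log h_{\ell-1}|^{1/2}$; since $h_\ell \!=\! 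K^{-\ell} h_0 < h_{\ell-1}$ with $K$ fixed, this is $O(h_{\ell-1}^{1/2}|\log h_{\ell-1}|^{1/2})$. The random upper limit $\utau$ only shrinks the supremum relative to $[0,T]$, so the stopping causes no difficulty here. This handles the first displayed inequality.

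For the second bound, the new ingredient is that the coarse path is evaluated at $\otau$ rather than at $\utau$, where $0 \leq \otau \!-\! \utau \leq h_{\ell-1}$ by the definition of $\otau$ as $\utau$ rounded up to a coarse-grid point. I would split
\[
\| \hX_{\ell,\utau} \!-\! \hX_{\ell-1,\otau} \|
\ \leq\ \| \hX_{\ell,\utau} \!-\! \hX_{\ell-1,\utau} \|
\ +\ \| \hX_{\ell-1,\utau} \!-\! \hX_{\ell-1,\otau} \|.
\]
The first term is bounded in $L^q$ by the supremum estimate just proved (note $\utau \leq \utau$, trivially within $[0,\utau]$). The second term is the jump in the \emph{coarse} piecewise-constant path over the interval $[\utau,\otau)$, which is contained within a single coarse timestep; since $\hX_{\ell-1}$ is piecewise constant, $\hX_{\ell-1,\utau} \!-\! \hX_{\ell-1,\otau}$ is either zero (if no coarse grid point lies strictly between, i.e.\ if $\utau$ and $\otau$ fall in the same coarse interval) or a single step-change $\hX_{\ell-1,t_{n+1}} \!-\! \lim_{s\to t_{n+1}-}\hX_{\ell-1,s}$ of the coarse path. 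In either case it is bounded in $L^q$ by $\EE[\sup_{[0,T]} \| \hX_{\ell-1,t} \!-\! \lim_{s\to t-}\hX_{\ell-1,s}\|^q]^{1/q}$, which Corollary~\ref{cor:strong} controls as $O(h_{\ell-1}^{1/2}|\log h_{\ell-1}|^{1/2})$. Adding the two contributions and using $h_\ell < h_{\ell-1}$ again gives the claimed bound.

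The only point that needs a little care --- and the main obstacle --- is that $\utau$ and $\otau$ are random \emph{stopping} times depending on both paths, so one must be sure the inequalities above hold pathwise (before any expectation is taken), and that the deterministic supremum bounds from Lemma~\ref{lemma:strong} and Corollary~\ref{cor:strong}, which are over the fixed interval $[0,T]$, dominate the stopped quantities regardless of the value of $\utau$. Since $0 \leq \utau \leq \otau \leq T$ always holds (both being capped at $T$ in the Euler--Maruyama exit-time definition), the pathwise bounds $\sup_{[0,\utau]}(\cdot) \leq \sup_{[0,T]}(\cdot)$ and "the jump over $[\utau,\otau)$ is at most the largest jump over $[0,T]$" are immediate, so the randomness of the stopping times is in fact harmless; I would simply state this explicitly. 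No independence or optional-stopping machinery is required --- the argument is entirely a deterministic pathwise reduction followed by Minkowski's inequality and the two previously established strong-error estimates.
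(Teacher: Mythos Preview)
Your proposal is correct and takes essentially the same approach as the paper, which simply states that both bounds ``follow directly from Lemma~\ref{lemma:strong} and Corollary~\ref{cor:strong}.'' You have merely spelled out the triangle-inequality decomposition and the pathwise domination by the $[0,T]$-suprema that the paper leaves implicit; your handling of the $\utau$ versus $\otau$ jump via Corollary~\ref{cor:strong} is exactly the intended use of that result.
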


\begin{proof}
These follow directly from Lemma \ref{lemma:strong} and 
Corollary \ref{cor:strong}.
\end{proof}

For a particular pair $(W,Z)$, we define $\htau_\ell$ and 
$\htau_{\ell-1}$ to be the exit times of the fine and coarse paths.
The next lemma bounds the expected absolute difference between
these two exit times. 
This is unaffected by the path splitting, so it is sufficient
to consider what happens when there is no splitting.

\begin{lemma}
\label{lemma:split_exit}
Given the definitions above and Assumptions 1 and 3,
\[
\EE\left[\, |\htau_\ell \!-\!  \htau_{\ell-1}| \,\right] =
O(h_{\ell-1}^{1/2}\, |\log h_{\ell-1}|^{1/2}).
\]
\end{lemma}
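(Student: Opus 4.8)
The plan is to bound $|\htau_\ell - \htau_{\ell-1}|$ by the amount of time that elapses, after the two paths are split at time $\otau$, before the path that is still inside the domain finally exits, plus the $O(h_{\ell-1})$ gap between $\utau$ and $\otau$. Concretely, since $0 \leq \otau - \utau \leq h_{\ell-1}$,
\[
|\htau_\ell - \htau_{\ell-1}| \ = \ \max(\htau_\ell,\htau_{\ell-1}) - \utau
\ \leq\ \big(\max(\htau_\ell,\htau_{\ell-1}) - \otau\big) \ +\ h_{\ell-1},
\]
and since $\hX_{\ell,\otau}\notin D$ forces $\htau_\ell \leq \otau$ (and symmetrically for the coarse path),
\[
\max(\htau_\ell,\htau_{\ell-1}) - \otau \ \leq\
{\bf 1}[\hX_{\ell,\otau}\!\in\! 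D]\,(\htau_\ell - \otau)
\ +\ {\bf 1}[\hX_{\ell-1,\otau}\!\in\! D]\,(\htau_{\ell-1} - \otau).
\]
It therefore suffices to bound the expectation of each of the two non-negative terms on the right.

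For the first term, I would observe that on the event $\{\hX_{\ell,\otau}\!\in\! D\}$ the coarse path must already have exited, and in fact---because $\otau>\utau$ can only happen when the fine path is the first to exit---on this event $\otau=\utau$ and the coarse path exits exactly at $\otau$. Hence $\hX_{\ell-1,\otau}\notin D$ while $\hX_{\ell-1,\otau-h_{\ell-1}}\in D$, so the distance from $\hX_{\ell-1,\otau}$ to $\partial D$ is at most a single coarse step, whose $L^q$ norm is $O(h_{\ell-1}^{1/2}|\log h_{\ell-1}|^{1/2})$ by Corollary \ref{cor:strong}; combined with the first bound of Lemma \ref{lemma:diff}, the distance from $\hX_{\ell,\otau}$ to $\partial D$ then has $L^q$ norm, hence $L^1$ norm, of the same order. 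Since the Euler--Maruyama scheme is a Markov chain on the fine grid and $\otau$ is a stopping time lying on that grid ($h_{\ell-1}$ being an integer multiple of $h_\ell$), I can apply Lemma \ref{lemma:exit} conditionally at $\otau$, with starting state $(\hX_{\ell,\otau},\otau)$ and with $y$ the projection of $\hX_{\ell,\otau}$ onto $\partial D$, to obtain, on $\{\hX_{\ell,\otau}\!\in\! D\}$,
\[
\EE\big[\,\htau_\ell - \otau \,\big|\, \hX_{\ell,\otau},\,\otau\,\big]
\ \leq\ L_{\exit}\,\|\hX_{\ell,\otau}-y\|_2 \ +\ c_{\exit}\,h_\ell^{1/2}.
\]
Taking expectations gives the $O(h_{\ell-1}^{1/2}|\log h_{\ell-1}|^{1/2})$ bound for this term.

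The second term is handled in exactly the same way with the roles of the two paths interchanged: on $\{\hX_{\ell-1,\otau}\!\in\! D\}$ the fine path has exited at $\utau=\htau_\ell\leq\otau$, so the distance from $\hX_{\ell-1,\otau}$ to $\partial D$ is at most $\|\hX_{\ell-1,\otau}-\hX_{\ell,\utau}\|$, which is precisely the quantity controlled by the second bound of Lemma \ref{lemma:diff}, and Lemma \ref{lemma:exit} applied to the coarse path bounds $\EE[\htau_{\ell-1}-\otau\mid \hX_{\ell-1,\otau},\otau]$ analogously (now with a $c_{\exit}h_{\ell-1}^{1/2}$ term). Summing the two bounds and adding the $h_{\ell-1}$ term from the decomposition completes the proof.

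I expect the main difficulty to be bookkeeping rather than any individual estimate, since each estimate is handed to us by an earlier result. In particular one must: identify $\otau$ as a stopping time of the fine-grid Markov chain and justify the conditional use of Lemma \ref{lemma:exit} through the (strong) Markov property; verify the case analysis, especially the claim that $\hX_{\ell,\otau}\!\in\! D$ forces $\otau=\utau$ with the coarse path exiting exactly at $\otau$; and use the elementary fact that a point reached from inside $D$ by a short jump to the complement of $D$ lies within that jump's length of $\partial D$, which is what turns the strong-convergence and path-difference bounds of Corollary \ref{cor:strong} and Lemma \ref{lemma:diff} into the boundary-distance bounds that feed into Lemma \ref{lemma:exit}. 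One should also take care to apply Lemma \ref{lemma:exit} with the timestep matching the relevant path ($h_\ell$ for the fine path, $h_{\ell-1}$ for the coarse one), though this affects only the constant and not the order.
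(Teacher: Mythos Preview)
Your proof is correct and follows essentially the same approach as the paper: a case split according to which path (if any) remains in $D$ at time $\otau$, followed by bounding the residual expected exit time via the distance of that path to $\partial D$ using Assumptions~1 and~3 (packaged as Lemma~\ref{lemma:exit}) together with the strong-error bounds of Lemma~\ref{lemma:diff}. The only cosmetic difference is that in your case $\{\hX_{\ell,\otau}\in D\}$ you reach $\partial D$ by going through the coarse path's overshoot (Corollary~\ref{cor:strong}) and then the fine/coarse path difference, whereas the paper simply takes a boundary point on the segment joining $\hX_{\ell,\utau}$ and $\hX_{\ell-1,\utau}$; both routes give the same order.
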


\begin{proof}
Each $W$ leads to one of three possibilities:
\begin{itemize}
\item $\hX_{\ell,\utau}\!\notin\! D$ and $\hX_{\ell-1,\otau}\!\notin\! D$
\item $\hX_{\ell,\utau}\!\notin\! D$ and $\hX_{\ell-1,\otau}\!\in\! D$
\item $\hX_{\ell-1,\utau}\!\notin\! D$ and $\hX_{\ell,\utau}\!\in\! D$
\end{itemize}

In the first case, $|\htau_\ell \!-\!  \htau_{\ell-1}| = \otau - \utau < h_{\ell-1}$.

In the second case, since $\hX_{\ell-1,\otau}\!\in\! D$ 
but $\hX_{\ell,\utau}\!\notin\! D$,
there is a point $y\in\partial D$ on the line 
between the two.  Hence, noting that $u_{\exit}(y,\otau)=\otau$, we have
\begin{eqnarray*}
\EE[\htau_{\ell-1}\!-\!\otau \,|\, W] &=&
\left(\hu_{\ell-1,\exit}(\hX_{\ell-1,\otau},\otau) - u_{\exit}(\hX_{\ell-1,\otau},\otau) \right)
\\ &+& \left(u_{\exit}(\hX_{\ell-1,\otau},\otau) - u_{\exit}(y,\otau) \right).
\end{eqnarray*}
The first term is bounded by Assumption 3, and the second term by the 
Lipschitz property in Assumption 1, giving
\[
\EE[\htau_{\ell-1}\!-\!\otau \,|\, W] \leq  
c_{\exit} h_{\ell-1}^{1/2} + L_{\exit} \left( \| \hX_{\ell,\utau}-\hX_{\ell-1,\otau} \|_2 
+ h_{\ell-1} \right).
\]
Adding in the fact that $\otau \!-\! \htau_\ell \leq h_{\ell-1}$, this
can be simplified to there being a constant $c$ for which
\[
\EE[\htau_{\ell-1}\!-\!\htau_\ell \,|\, W] \leq  
c \left( h_{\ell-1}^{1/2} + \|\hX_{\ell,\utau}-\hX_{\ell-1,\otau} \|_2 \right).
\]
A similar result holds in the third case, and hence there is a constant 
$c$ for which
\[
\EE[\htau_\ell \!-\! \htau_{\ell-1} \,|\, W ]
\leq c \left( h_{\ell-1}^{1/2} + \|\hX_{\ell,\utau}-\hX_{\ell-1,\otau} \|_2 \right).
\]
Combining these two results, taking an expectation over $W$, and 
using the result in Lemma \ref{lemma:diff} gives the desired 
result that
\[
\EE\left[\, |\htau_\ell \!-\!  \htau_{\ell-1}| \,\right] =
O(h_{\ell-1}^{1/2}\, |\log h_{\ell-1}|^{1/2}).
\]
\end{proof}

A new paper by Bouchard, Geiss and Gobet \cite{bgg17} analyses 
the expected $L_1$ error in the exit time, and proves that under 
certain conditions $\EE[\,|\htau\!-\!\tau|\,] = O(h^{1/2})$. It follows 
from this that we have the slightly stronger result
$\EE\left[\, |\htau_\ell \!-\!  \htau_{\ell-1}| \,\right] = O(h_{\ell-1}^{1/2})$.

\vspace{0.1in}

We are now ready to prove the main theorem of this section.

\begin{theorem}
\label{thm:main}
Under Assumptions 1 and 3, 
\begin{eqnarray*}
\VV\left[ \rule{0in}{0.16in} \, 
\EE[\hP_\ell \!-\! \hP_{\ell-1} \, | \, W] \, \right]
&=& O(h_{\ell-1} |\log h_{\ell-1}|),   \\
\EE\left[ \rule{0in}{0.16in} \, 
\VV[\hP_\ell \!-\! \hP_{\ell-1} \, | \, W] \, \right]
&=& O(h_{\ell-1}^{1/2} |\log h_{\ell-1}|^{1/2}).
\end{eqnarray*}
\end{theorem}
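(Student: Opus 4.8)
The plan is to decompose $\hP_\ell - \hP_{\ell-1}$ into the contributions before and after the splitting time $\otau$, and to treat the two quantities $\VV[\EE[\hP_\ell-\hP_{\ell-1}\,|\,W]]$ and $\EE[\VV[\hP_\ell-\hP_{\ell-1}\,|\,W]]$ through a common estimate on $\EE[(\hP_\ell-\hP_{\ell-1})^2]$, which dominates the first quantity directly and, after the appropriate conditioning, controls the second via $\VV[\hP_\ell-\hP_{\ell-1}\,|\,W] \leq \EE[(\hP_\ell-\hP_{\ell-1})^2\,|\,W]$. First I would use the Markov property at time $\otau$: conditional on the common path up to $\otau$, the fine continuation contributes $\hP_\ell = \oP_{\ell,\otau} + (\text{correction on }[\otau,\htau_\ell])$ and similarly for the coarse path, where $\oP_{\ell,\otau}$ denotes the discounted-accumulated-$f$ plus discounted-$g$ functional started afresh from $(\hX_{\ell,\otau},\otau)$. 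The difference then splits as a ``pre-$\otau$'' piece (contributions of $\int f$ and the running discount factor over $[0,\otau]$, which is a time interval of length $\otau - \utau < h_{\ell-1}$ up to the point where one path has already exited, plus the mismatch in $g$ at the exited endpoint) and a ``post-$\otau$'' piece involving the two independently-continued functionals started from nearby points $\hX_{\ell,\otau}$ and $\hX_{\ell-1,\otau}$.

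For the post-$\otau$ piece I would add and subtract $u(\hX_{\ell,\otau},\otau)$ and $u(\hX_{\ell-1,\otau},\otau)$, exploiting that these are the conditional expectations of the respective continuations; the difference of the two $u$-values is bounded by $L_u(\|\hX_{\ell,\otau}-\hX_{\ell-1,\otau}\|_2 + |\htau\text{-rounding}|)$ by Assumption 1, which by Lemma \ref{lemma:diff} is $O(h_{\ell-1}^{1/2}|\log h_{\ell-1}|^{1/2})$ in every $L^q$ norm. The remaining fluctuation of each continuation around its mean is controlled by Theorem \ref{thm:var}: the conditional variance of each fresh functional is at most $c\,\EE[\htau - \otau\,|\,\cdot]$, and since at least one of the two paths has just exited at $\otau$, the surviving path starts within $O(h_{\ell-1}^{1/2}|\log h_{\ell-1}|^{1/2})$ of the boundary (a point $y\in\partial D$ lies on the segment between $\hX_{\ell,\utau}$ and $\hX_{\ell-1,\otau}$), so Lemma \ref{lemma:exit} gives $\EE[\htau-\otau\,|\,W] = O(h_{\ell-1}^{1/2}|\log h_{\ell-1}|^{1/2})$ — this is exactly where the splitting pays off, because without it this surviving-path variance would be the dominant $O(h^{1/2})$ term and would not be damped. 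For the pre-$\otau$ piece I would bound the $f$-integral contribution by $\|f\|_\infty \cdot h_{\ell-1}$ times a bounded discount factor, the discount-factor mismatch between levels over $[0,\utau]$ by a Lipschitz/Gronwall argument using Lemma \ref{lemma:diff}, and the boundary-value mismatch at the exited endpoint by the $C^{2,1}$ smoothness of $g$ together with Corollary \ref{cor:strong} (the step jump at exit is $O(h^{1/2}|\log h|^{1/2})$) and Lemma \ref{lemma:split_exit} for the time mismatch; all of these are $O(h_{\ell-1}^{1/2}|\log h_{\ell-1}|^{1/2})$ in $L^2$, with some genuinely $O(h_{\ell-1})$.

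Assembling: $\EE[(\hP_\ell-\hP_{\ell-1})^2] = O(h_{\ell-1}|\log h_{\ell-1}|)$, which immediately gives the first claimed bound since $\VV[\EE[\,\cdot\,|\,W]] \leq \EE[(\hP_\ell-\hP_{\ell-1})^2]$. For the second claim I would instead keep the conditioning on $W$ throughout, noting $\VV[\hP_\ell-\hP_{\ell-1}\,|\,W]$ picks up the conditional variance of the post-$\otau$ continuations, which is $O(\EE[\htau-\otau\,|\,W])$ pathwise by Theorem \ref{thm:var}; taking $\EE$ over $W$ and invoking Lemma \ref{lemma:exit} in expectation yields $\EE[\VV[\hP_\ell-\hP_{\ell-1}\,|\,W]] = O(h_{\ell-1}^{1/2}|\log h_{\ell-1}|^{1/2})$. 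The main obstacle I anticipate is the bookkeeping around $\otau\neq\utau$ and around which of the three cases in Lemma \ref{lemma:split_exit} occurs: one must carefully track the discount factor $E(0,\cdot)$ across the level mismatch and ensure the $g$-endpoint terms are compared at a genuine boundary point, and one must verify that the ``surviving path starts near the boundary'' estimate holds uniformly over the conditioning, so that $\EE[\htau-\otau\,|\,W]$ really is $O(h_{\ell-1}^{1/2}|\log h_{\ell-1}|^{1/2})$ after integrating out $W$ rather than only in a weaker averaged sense.
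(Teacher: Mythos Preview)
Your overall decomposition into pre-splitting and post-splitting pieces, with the post-splitting part handled by comparing to the conditional expectation and then invoking Theorem \ref{thm:var}, is the same strategy the paper uses. Your argument for the second bound is essentially correct and matches the paper: the conditional variance given $W$ comes only from the surviving continuation, Theorem \ref{thm:var} bounds it by a constant times the expected residual exit time, and Lemma \ref{lemma:split_exit} (or Lemma \ref{lemma:exit}) then gives $O(h_{\ell-1}^{1/2}|\log h_{\ell-1}|^{1/2})$ after integrating over $W$.

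There is, however, a genuine gap in your treatment of the \emph{first} bound. You propose to control $\VV[\EE[\hP_\ell-\hP_{\ell-1}\,|\,W]]$ via the unconditional second moment $\EE[(\hP_\ell-\hP_{\ell-1})^2]$, and you claim that this second moment is $O(h_{\ell-1}|\log h_{\ell-1}|)$. But it is not: by the law of total variance,
\[
\EE[(\hP_\ell-\hP_{\ell-1})^2] \ \geq\ \EE\!\left[\VV[\hP_\ell-\hP_{\ell-1}\,|\,W]\right],
\]
and you have just argued (correctly) that the right-hand side is of order $h_{\ell-1}^{1/2}|\log h_{\ell-1}|^{1/2}$. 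Concretely, the ``fluctuation of each continuation around its mean'' that you isolate contributes its full conditional variance, $O(h_{\ell-1}^{1/2}|\log h_{\ell-1}|^{1/2})$, to the unconditional second moment. So your assembled estimate $\EE[(\hP_\ell-\hP_{\ell-1})^2]=O(h_{\ell-1}|\log h_{\ell-1}|)$ is false, and the route through the unconditional second moment cannot deliver the first bound.

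The fix --- and this is exactly what the paper does --- is to take the conditional expectation \emph{before} squaring. Once you condition on $W$, the post-splitting continuation is replaced by its conditional mean $\hu_\ell(\hX_{\ell,\utau},\utau)$ (respectively $\hu_{\ell-1}(\hX_{\ell-1,\otau},\otau)$), and the fluctuation term vanishes identically from $\EE[\hP_\ell-\hP_{\ell-1}\,|\,W]$. You are then left with a pathwise bound of the form
\[
\left|\EE[\hP_\ell-\hP_{\ell-1}\,|\,W]\right| \ \leq\ c_1\,\|\Delta\hX\|_\infty + c_2\,\|\hX_{\ell,\utau}-\hX_{\ell-1,\otau}\|_2 + c_3\,h_{\ell-1}^{1/2},
\]
which can be squared and averaged using Lemma \ref{lemma:diff} to give $O(h_{\ell-1}|\log h_{\ell-1}|)$. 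Note also a minor point: the conditional expectations of the continuations are the \emph{discrete} value functions $\hu_\ell$, $\hu_{\ell-1}$, not $u$ itself; Assumption 3 is what lets you pass from $\hu_\ell$ and $\hu_{\ell-1}$ to the common $u$, at the cost of an $O(h_{\ell-1}^{1/2})$ term, before applying the Lipschitz bound from Assumption 1.
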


\begin{proof}
For the fine path,
\begin{eqnarray*}
\hP_\ell &=& \int_0^{\htau_\ell} \hE_\ell(0,s)\, f(\hX_{\ell,s},s) \, \D s
 + \hE_\ell(0,\htau_\ell)\, g(\hX_{\ell,\htau_\ell},\htau_\ell) \\[0.05in]
&=& p_\ell^{(1)}(W) + p_\ell^{(2)}(W)\, p_\ell^{(3)}(W,Z),
\end{eqnarray*}
where
\begin{eqnarray*}
p_\ell^{(1)}(W)   &=& \int_0^{\utau} \hE_\ell(0,s)\, f(\hX_{\ell,s},s) \, \D s, \\[0.05in]
p_\ell^{(2)}(W)   &=& \hE_\ell(0,\utau) \\[0.05in]
p_\ell^{(3)}(W,Z) &=& \int_\utau^{\htau_\ell} \hE_\ell(\utau,s)\, f(\hX_{\ell,s},s) \, \D s
~ + ~ \hE_\ell(\utau,\htau_\ell)\, g(\hX_{\ell,\htau_\ell},\htau_\ell),
\end{eqnarray*}
and similarly for the coarse path,
\begin{eqnarray*}
\hP_{\ell-1} &=& \int_0^{\htau_{\ell-1}} \hE_{\ell-1}(0,s)\, f(\hX_{{\ell-1},s},s) \, \D s
 + \hE_{\ell-1}(0,\htau_{\ell-1})\, g(\hX_{{\ell-1},\htau_{\ell-1}},\htau_{\ell-1}) \\[0.05in]
&=& p_{\ell-1}^{(1)}(W) + p_{\ell-1}^{(2)}(W)\, p_{\ell-1}^{(3)}(W,Z),
\end{eqnarray*}
where
\begin{eqnarray*}
p_{\ell-1}^{(1)}(W)   &=& \int_0^{\otau} \hE_{\ell-1}(0,s)\, f(\hX_{{\ell-1},s},s) \, \D s, \\[0.05in]
p_{\ell-1}^{(2)}(W)   &=& \hE_{\ell-1}(0,\otau) \\[0.05in]
p_{\ell-1}^{(3)}(W,Z) &=& \int_\otau^{\htau_{\ell-1}} \hE_{\ell-1}(\otau,s)\, f(\hX_{{\ell-1},s},s) \, \D s
~ + ~ \hE_{\ell-1}(\otau,\htau_{\ell-1})\, g(\hX_{{\ell-1},\htau_{\ell-1}},\htau_{\ell-1}).
\end{eqnarray*}

Noting that 
$
\exp(v) - \exp(w) = (v\!-\!w) \, \exp(\xi),
$
for some $\xi$ in the interval with endpoints $v, w$, and defining
\[
\| \Delta \hX \|_\infty = \sup_{t\in[0,\utau]} \| \hX_{\ell,t} \!-\! \hX_{\ell-1,t} \|_2,
\]
then for $s\in[0,\utau]$ we have
\[
\left| \hE_\ell(0,s) - \hE_{\ell-1}(0,s) \right| \leq
\exp(T \| V \|_\infty) \ T L_V \| \Delta \hX \|_\infty.
\]
We also have
\[
\left| f(\hX_{\ell,s},s) - f(\hX_{\ell-1,s},s) \right| 
\leq L_f \| \Delta \hX \|_\infty,
\]
and hence there exists a constant $c^{(1)}$ (independent of $W$)
such that
\begin{equation}
\left| p_\ell^{(1)} \!-\! p_{\ell-1}^{(1)}\right|
\ \leq\ 
c^{(1)} \| \Delta \hX \|_\infty + h_{\ell-1}\, \exp(T \| V \|_\infty)\, \| f \|_\infty,
\label{eq:p1}
\end{equation}
with the second term on the right being due to the possible 
difference in the upper limits of integration in the definitions 
of $p_\ell^{(1)}, p_{\ell-1}^{(1)}$.

Similarly, there exists a constant $c^{(2)}$ (independent of $W$)
such that
\begin{equation}
\left| p_\ell^{(2)} \!-\! p_{\ell-1}^{(2)}\right|
\ \leq\ 
c^{(2)} \| \Delta \hX \|_\infty + h_{\ell-1} \, \exp(T \| V \|_\infty)\, \| V \|_\infty.
\label{eq:p2}
\end{equation}

Taking an expectation over $Z$, we have
\begin{eqnarray*}
\EE[ p_\ell^{(3)} \,|\, W ] &=& \hu_\ell(\hX_{\ell,\utau},\utau), \\ 
\EE[ p_{\ell-1}^{(3)} \,|\, W ] &=& \hu_{\ell-1}(\hX_{\ell-1,\otau},\otau),
\end{eqnarray*}
and therefore, using Assumption 3 together with 
Assumption 1's Lipschitz property for $u(x,t)$ we obtain
\begin{equation}
\left| \EE[\, p_\ell^{(3)} \!-\! p_{\ell-1}^{(3)} \,|\, W ] \right|
\ \leq\  
c_u (h_\ell^{1/2} + h_{\ell-1}^{1/2} )
+ L_u \left( \| \hX_{\ell,\utau}\!-\!\hX_{\ell-1,\utau} \|_2 + h_{\ell-1} \right)
\label{eq:p3}
\end{equation}

Combining the results in Equations (\ref{eq:p1})-(\ref{eq:p3}), 
there exist constants $c_1$, $c_2$, $c_3$, independent 
of $W$, such that
\[
\left| \rule{0in}{0.16in} \EE[\hP_\ell \!-\! \hP_{\ell-1} \,|\, W] \right|
\ \leq \
c_1 \, \| \Delta \hX \|_\infty + 
c_2 \, \| \hX_{\ell,\utau}\!-\!\hX_{\ell-1,\utau} \|_2 + 
c_3 \, h_{\ell-1}^{1/2}.
\]
Hence, using the results from Lemma \ref{lemma:diff}, there 
exists a constant $c$ such that
\[
\VV\left[ \EE[\hP_\ell \!-\! \hP_{\ell-1} \, | \, W ] \right]
\ \leq\ 
\EE\left[ \left( \EE[\hP_\ell \!-\! \hP_{\ell-1} \, | \, W ] \right)^2\right]
\ \leq\ c \, h_{\ell-1} \, |\log h_{\ell-1}|
\]
for sufficiently small $h_{\ell-1}$.  

This proves the first bound in the statement of the theorem.
For the second bound, we note that
\begin{eqnarray*}
\lefteqn{ \VV\left[\, \hP_\ell \!-\! \hP_{\ell-1} \, | \, W \, \right] } \\
& = & \VV\left[ p_\ell^{(2)}(W)\, p_\ell^{(3)}(W,Z) 
              - p_{\ell-1}^{(2)}(W)\, p_{\ell-1}^{(3)}(W,Z) \,|\, W \right] \\
& = & \left(p_\ell^{(2)}(W)\right)^2 \VV\left[p_\ell^{(3)}(W,Z) \,|\, W \right]
+ \left(p_{\ell-1}^{(2)}(W)\right)^2 \VV\left[p_{\ell-1}^{(3)}(W,Z) \,|\, W  \right]
\end{eqnarray*}
The first line in the above is because $p_\ell^{(1)}(W)$ and 
$p_{\ell-1}^{(1)}(W)$ do not vary with $Z$, and so do not contribute
to the variance.
The second line comes from the fact that at most only one of the 
coarse and fine paths is split, and so at most only one of 
$p_\ell^{(3)}(W,Z)$ and $p_{\ell-1}^{(3)}(W,Z)$ has a non-zero variance.

The terms $p_\ell^{(2)}(W)$ and $p_{\ell-1}^{(2)}(W)$ are each bounded 
by $\exp(T \|V\|_\infty)$.  Using Theorem \ref{thm:var} to bound
$\VV[p_\ell^{(3)}(W,Z)]$ and $\VV[p_{\ell-1}^{(3)}(W,Z)]$ gives the result
that there exists a constant $c$ such that
\[
\VV\left[\, \hP_\ell \!-\! \hP_{\ell-1} \,|\, W \, \right] 
\ \leq\ c\ 
\EE[\max(\htau_\ell,\htau_{\ell-1})-\otau \,|\, W ]
\ \leq\ c\  
\EE[ \, |\htau_\ell \!-\! \htau_{\ell-1}| \,|\, W ].
\]
The above inequalities hold for each value of the random variable $W$.
Taking an expectation, and using the result in Lemma 
\ref{lemma:split_exit} then gives the final result that
\[
\EE\left[\rule{0in}{0.16in}
\VV\left[\, \hP_\ell \!-\! \hP_{\ell-1} \, | \, W \, \right] \right]
= O( h_{\ell-1}^{1/2} |\log h_{\ell-1}|^{1/2}).
\]
\end{proof}

Using the bounds from this theorem, we note that 
the two terms in the bound for the variance $V_\ell$
in Equation (\ref{eq:Vl}) are balanced when
$M_\ell = O( h_{\ell-1}^{-1/2} |\log h_{\ell-1}|^{-1/2})$.

%\newpage

\subsection{Expected complexity}

Finally, given the previous results we can obtain the 
following result on the expected complexity to achieve an
$\eps$ root-mean-square error.

\begin{corollary}
Under the given assumptions, a RMS error of $\eps$ can be 
achieved with an $O(\eps^{-2} |\log \eps|^3)$ expected 
computational cost.
\end{corollary}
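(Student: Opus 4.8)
The plan is to verify the hypotheses of the MLMC complexity theorem, Theorem~\ref{thm:MLMC}, for the estimator defined in Section~\ref{sec:new}, identify the rate parameters $\alpha,\beta,\gamma$, and then carry out the optimisation over the number of levels and the sample sizes directly, because the variance bound obtained above carries an extra logarithmic factor not covered by the clean statement of Theorem~\ref{thm:MLMC}. First, with $h_\ell = 4^{-\ell} h_0$, Assumption~3 gives $\left|\EE[\hP_\ell] - \EE[P]\right| = \left|\hu_\ell(x_0,0) - u(x_0,0)\right| \le c_u h_\ell^{1/2} = c_u h_0^{1/2}\, 2^{-\ell}$, so hypothesis~(i) holds with $\alpha = 1$. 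Hypothesis~(ii), that $\EE[Y_\ell] = \EE[\hP_\ell \!-\! \hP_{\ell-1}]$, is immediate from Lemma~\ref{lemma:split}: replacing a single path continuation by an average of i.i.d.\ continuations does not change the mean. For the per-sample variance, substituting the two bounds of Theorem~\ref{thm:main} into the decomposition~(\ref{eq:Vl}) and choosing $M_\ell = \lceil h_{\ell-1}^{-1/2}\,|\log h_{\ell-1}|^{-1/2}\rceil$ to balance the two terms gives $V_\ell = O(h_{\ell-1}\,|\log h_{\ell-1}|) = O(\ell\, 2^{-2\ell})$, that is, hypothesis~(iii) with $\beta = 2$ up to a logarithmic factor.

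Next I would bound $C_\ell$, the expected cost of one sample, by splitting it into the work done before and after the split. Before the split, both paths are advanced together for $\utau/h_\ell$ fine timesteps, and since $\EE[\utau] \le T$ this part costs $O(h_\ell^{-1})$ in expectation. After the split there are $M_\ell$ sub-paths, each starting from $\hX_{\ell,\utau}$ or $\hX_{\ell-1,\otau}$; since the other of the coarse/fine pair has already left $D$, there is a point $y\in\partial D$ on the segment joining the two positions, so the starting distance to $\partial D$ is at most $\|\hX_{\ell,\utau} - \hX_{\ell-1,\otau}\|_2$. Lemma~\ref{lemma:exit} then bounds the conditional expected remaining exit time of each sub-path by $L_{\exit}\|\hX_{\ell,\utau} - \hX_{\ell-1,\otau}\|_2 + O(h_\ell^{1/2})$; taking expectations and using Lemma~\ref{lemma:diff} with $q=1$ this is $O(h_{\ell-1}^{1/2}\,|\log h_{\ell-1}|^{1/2}) = O(h_\ell^{1/2}\,|\log h_\ell|^{1/2})$, i.e.\ $O(h_\ell^{-1/2}\,|\log h_\ell|^{1/2})$ timesteps per sub-path. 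Multiplying by $M_\ell = O(h_\ell^{-1/2}\,|\log h_\ell|^{-1/2})$ the logarithms cancel and the post-split work is again $O(h_\ell^{-1})$. Hence $C_\ell = O(h_\ell^{-1}) = O(2^{2\ell})$, so hypothesis~(iv) holds with $\gamma = 2$, and $\alpha = 1 = \fracs{1}{2}\min(\beta,\gamma)$.

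It then remains to run the standard MLMC optimisation, keeping explicit track of the logarithms. Choose $L$ minimal with $c_1 2^{-L} < \eps/\sqrt{2}$, so $2^L = O(\eps^{-1})$, $L = O(|\log\eps|)$, and the squared bias is at most $\eps^2/2$. Set $N_\ell = \lceil 2\,\eps^{-2}\sqrt{V_\ell/C_\ell}\ \sum_{k=0}^{L}\sqrt{V_k C_k}\,\rceil$; this yields $\VV[Y] = \sum_\ell V_\ell/N_\ell \le \eps^2/2$, hence $MSE < \eps^2$, with expected cost $\sum_{\ell=0}^{L} N_\ell C_\ell \le 2\,\eps^{-2}\big(\sum_{\ell=0}^{L}\sqrt{V_\ell C_\ell}\big)^2 + \sum_{\ell=0}^{L} C_\ell$. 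Since $\sqrt{V_\ell C_\ell} = O(\sqrt{\ell})$, we get $\sum_{\ell=0}^{L}\sqrt{V_\ell C_\ell} = O(L^{3/2})$, while $\sum_{\ell=0}^{L} C_\ell = O(2^{2L}) = O(\eps^{-2})$, so the expected cost is $O(\eps^{-2} L^3) + O(\eps^{-2}) = O(\eps^{-2}|\log\eps|^3)$, which is the claimed bound.

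The only real obstacle is the accounting of the logarithmic factors. The factor $|\log h_\ell|$ in $V_\ell$ --- ultimately inherited, through Theorem~\ref{thm:main}, from the $|\log h|^{1/2}$ in the strong-convergence estimates of Lemma~\ref{lemma:strong} and Corollary~\ref{cor:strong} --- is precisely what promotes the $(\log\eps)^2$ of the clean $\beta=\gamma$ case of Theorem~\ref{thm:MLMC} to $(\log\eps)^3$; and one must check that the opposing powers of $|\log|$ in $M_\ell$ and in the per-sub-path cost cancel so that the cost exponent stays at $\gamma = 2$ with no logarithm, since otherwise the exponent of $\eps$ would exceed $-2$. Alternatively one may simply invoke a version of the MLMC theorem from \cite{giles15} that already admits a logarithmic factor in $V_\ell$ when $\beta = \gamma$, but the explicit optimisation above makes the origin of each power of $\log\eps$ transparent.
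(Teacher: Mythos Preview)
Your proposal is correct and follows essentially the same route as the paper's proof: the same choice of $M_\ell$ (of order $h_\ell^{-1/2}|\log h_\ell|^{-1/2}$), the same variance bound $V_\ell=O(\ell\,2^{-2\ell})$ from Theorem~\ref{thm:main} and~(\ref{eq:Vl}), the same cost bound $C_\ell=O(h_\ell^{-1})$, and the same explicit Lagrangian optimisation yielding $\sum_\ell \sqrt{C_\ell V_\ell}=O(L^{3/2})$ and hence $O(\eps^{-2}|\log\eps|^3)$. The only cosmetic difference is that you bound the post-split cost by invoking Lemma~\ref{lemma:exit} together with Lemma~\ref{lemma:diff} directly, whereas the paper packages that step as $\EE[\,|\htau_\ell-\htau_{\ell-1}|\,]=O(h_{\ell-1}^{1/2}|\log h_{\ell-1}|^{1/2})$ via Lemma~\ref{lemma:split_exit}; since the latter is proved from exactly those ingredients, the two arguments are the same.
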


\begin{proof}
The proof is very similar to the standard multilevel 
complexity analysis in \cite{giles08}, but has to be modified 
slightly because the additional $\log h_\ell$ factors 
in the variance mean it does not quite have the usual 
form expected in condition iii) in Theorem \ref{thm:MLMC}.

On level $\ell\!\geq\! 1$, we choose to use $h_\ell = 4^{-\ell} h_0$ 
with $M_\ell = \lceil 2^\ell / \ell^{1/2} \rceil$ paths 
in the splitting estimator (with $\lceil x \rceil$ denoting 
$x$ rounded up to the nearest integer).
Recalling from Assumption 2 that there is assumed to be 
a unit computational cost for each timestep, the computational
cost of one sample is equal to the sum of the number of coarse 
and fine timesteps before the first of the two paths exits 
the domain, plus the combined sum of the timesteps for the 
subsequent $M_\ell$ split coarse or fine paths.

Hence the expected cost of one sample with splitting
on level $\ell\!\geq\! 1$ is
\[
C_\ell\ \leq\ (h_\ell^{-1}+h_{\ell-1}^{-1}) 
\, \EE[ \min(\htau_\ell,\htau_{\ell-1}) ]
+ M_\ell\, h_\ell^{-1}\,  \EE[ | \htau_\ell - \htau_{\ell-1}| ]
\ =\ O(h_\ell^{-1}),
\]
since $M_\ell \!=\! O(h_\ell^{-1/2} |\log h_\ell|^{-1/2})$ and Lemma 9 gives 
$\EE[ | \htau_\ell - \htau_{\ell-1}| ] \!=\! O(h_\ell^{1/2} |\log h_\ell|^{1/2})$.

Given $O(h_\ell^{1/2})$ weak convergence, to reduce the bias 
to $\eps/\sqrt{2}$ requires $h_L \!=\! O(\eps^2)$ on the finest 
level, while for $O(h_\ell)$ weak convergence using the 
Gobet \& Menozzi correction we require $h_L \!=\! O(\eps)$.
In either case, we have $L \!=\! O(|\log \eps|)$. 

Combining Equation (\ref{eq:Vl}) with the results from 
Theorem \ref{thm:main}, the multilevel variance for 
levels $\ell\!\geq\! 1$  is
\[
V_\ell\ = O(h_{\ell-1} |\log h_{\ell-1}|) = O(h_{\ell-1}\, \ell).
\]
Following the analysis in \cite{giles15},
choosing
\[
N_\ell = 2\, \eps^{-2} \left\lceil \left( \sum_{\ell'=0}^L \sqrt{C_{\ell'} V_{\ell'}} \right)
                          \sqrt{V_\ell / C_\ell} \right\rceil,
\]
to ensure that the overall variance is less than $\fracs{1}{2}\eps^2$,
the expected total cost is bounded by
\[
C_{tot}\ \leq\ 2\, \eps^{-2} \left( \sum_{\ell=0}^L \sqrt{C_\ell V_\ell} \right)^2
+ ~ \sum_{\ell=0}^L C_\ell.
\]
Since $C_\ell V_\ell = O(\ell)$, there exists a constant $c$ such that
\[
\sum_{\ell=0}^L \sqrt{C_\ell V_\ell}
\ \leq\  
c\ \sum_{\ell=0}^L \ell^{1/2}
\ \leq\  
c\ \int_0^{L+1} \ell^{1/2} \, \D \ell
\ =\ \frac{2c}{3} \, (L\!+\!1)^{3/2}.
\]
Because $\displaystyle \sum_{\ell=0}^L C_\ell = O(\eps^{-2})$, 
it follows that the total cost is $O(\eps^{-2} |\log \eps|^{3})$.
\end{proof}

Comment:  the analysis above uses
$M_\ell\!=\! \lceil 2^\ell / \ell^{1/2} \rceil$.
If instead, we use $M_\ell\!=\! 2^\ell$ then
$C_\ell \!=\! O(h_\ell^{-1}  |\log h_\ell|^{1/2})$ 
and the overall complexity becomes 
$O(\eps^{-2} |\log \eps|^{7/2})$.  This is 
poorer, but if the variance analysis 
is not sharp, and in fact 
$\VV[ \EE[\hP_\ell \!-\! \hP_{\ell-1} \, | \, W] \, ] \!=\! O(h_\ell)$
and
$\EE[ \VV[\hP_\ell \!-\! \hP_{\ell-1} \, | \, W] \, ] \!=\! O(h_\ell^{1/2})$,
then the choice $M_\ell\!=\! 2^\ell$
is asymptotically optimal and would give a 
complexity of
$O(\eps^{-2} |\log \eps|^{2})$.
Hence, the numerical experiments use 
$M_\ell\!=\! 2^\ell$.

\section{Numerical experiments}

%\subsection{3D cube}

The testcase is simple Brownian diffusion in a 3D cube, 
$D = [-1, +1]^3$, over a unit time interval, $[0,1]$.
The initial data is $x_0 \!=\! {\bf 0}$, and the output quantity
of interest is the expected exit time.  For the PDE
formulation this corresponds to the PDE
\[
\frac{\partial u}{\partial t} 
+ \fracs{1}{2} \, \nabla^2 u + 1 = 0,
\]
subject to homogeneous boundary data on $\partial D$
and $t\!=\!1$. Using a standard Fourier series expansion,
adjusted to the homogeneous boundary conditions on $\partial D$,
and noting the symmetry in each coordinate direction,
the solution at earlier times $0\!\leq\! t \!<\! 1$ 
can be expressed as
\[
u(x,t) = \sum_{{\rm odd }\, i,j,k \geq 1} A_{i,j,k}(t)
 \cos(i \pi x_1/2) \cos(j \pi x_2/2) \cos(k \pi x_3/2)
\]
where the amplitudes $A_{i,j,k}(t)$ satisfy the 
ordinary differential equation
\[
\frac{\D A_{i,j,k}}{\D t} 
- \frac{(i^2\!+\!j^2\!+\!k^2)\, \pi^2}{8} A_{i,j,k} 
+ \frac{64 \, (-1)^{(i+j+k+1)/2}}{i\,j\, k\, \pi^3} = 0,
\]
subject to terminal condition $A_{i,j,k}(1) \!=\! 0$.
Therefore
\[
A_{i,j,k}(t) = 
\frac{512 \, (-1)^{(i+j+k+1)/2}}{i\,j\, k\, (i^2\!+\!j^2\!+\!k^2)\, \pi^5}
\left(1 - 
\exp\left(- \frac{8\ (1\!-\!t)}{(i^2\!+\!j^2\!+\!k^2)\, \pi^2}\right)
\ \right),
\]
which gives $u({\bf 0},0) \approx 0.435930$.

On the coarsest level the timestep is chosen to be $h_0=0.1$, 
so that the mean exit time corresponds to just 4 timesteps.  On finer
levels it is $h_\ell = 4^{-\ell}\, h_0$.  $M_\ell\!=\! 2^\ell$ sub-samples
are used when approximating the conditional expectations once either 
the coarse or fine path has left the domain.

Numerical results are obtained with 3 schemes:
\begin{itemize}
\item
(orig) -- the original MLMC method proposed by 
Higham {\it et al} \cite{hmrsy13}
\item
(new1) -- the new method with sub-sampling
to approximate the conditional expectations
\item
(new2) -- the new method which in addition uses the 
Gobet \& Menozzi (GM) boundary shift, in which a path on level 
$\ell$ is considered to have crossed the boundary if 
$\| x \|_\infty \!>\! 1 \!-\! c_0\, h^{1/2}_\ell$ where 
$c_0 \!=\! -\zeta(1/2)/\sqrt{2 \pi} \!\approx\! 0.5826$.
\end{itemize}

\begin{figure}[t!]
\begin{center}
\includegraphics[width=0.9\textwidth]{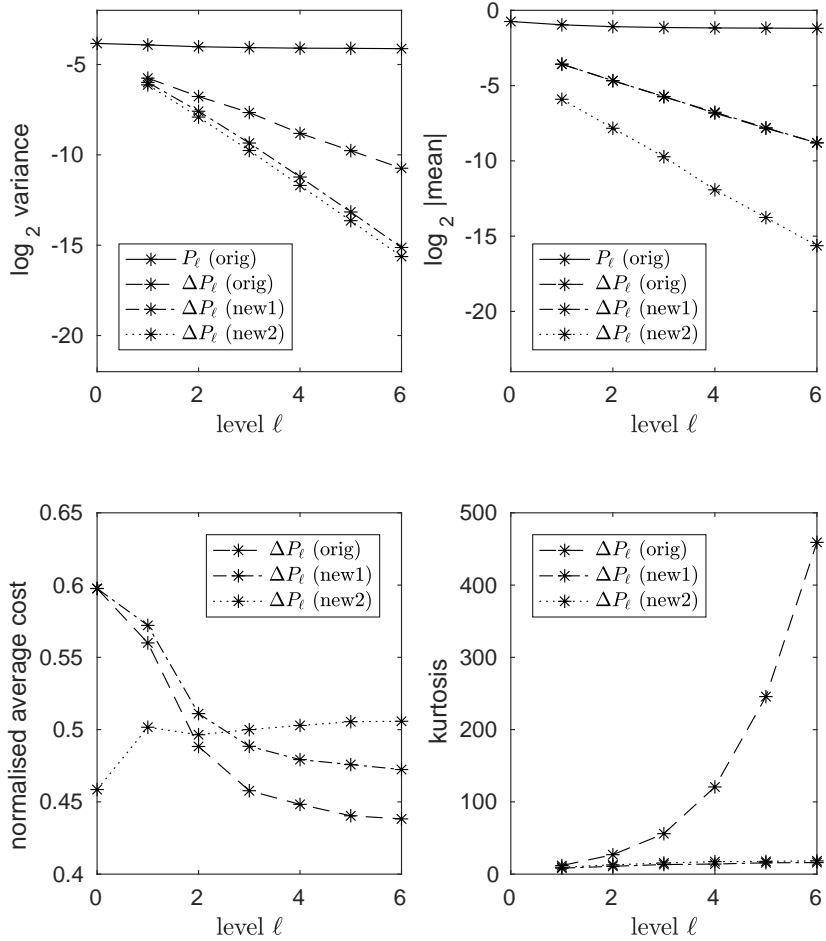}
\end{center}

\caption{Testcase: MLMC variance $V_\ell$, mean correction 
$\EE[\oP_\ell\!-\!\oP_{\ell-1}]$, normalised cost per path sample, 
and kurtosis for three MLMC methods, and standard 
single level Monte Carlo.}
\label{fig:fk1_1}
\end{figure}

Figure \ref{fig:fk1_1} compares results from the three schemes, and 
also a standard single level Monte Carlo simulation without the
Gobet \& Menozzi boundary correction.

The top left plot shows the decay in the MLMC variance $V_\ell$.
This improves from approximately $O(h_\ell^{1/2})$ with the original 
method to approximately $O(h_\ell)$ with sub-sampling. Adding the
GM correction reduces the variance slightly.
The top right plot shows the convergence of $\EE[\hP_\ell \!-\! \hP_{\ell-1}]$.
As expected the introduction of sub-sampling does not affect this,
to within Monte Carlo sampling error, but the GM correction improves
the rate of convergence from $O(h_\ell^{1/2})$ to $O(h_\ell)$.

The bottom left plot shows the average cost per path 
at each level, defined as the number of generated Normal random 
numbers expressed as a fraction of the number $3/h_\ell$ which 
would be required for a single fine path to reach time $t\!=\!1$.
As expected, the cost for the original method is approximately
equal to the expected exit time.  With sub-sampling, the cost is
slightly higher, up to 15\% when using the GM correction, but the 
additional cost is not significant
and in particular does not increase with level $\ell$.

The bottom right plot shows the computed kurtosis of 
$\oP_\ell \!-\! \oP_{\ell-1}$.  The increasing value for the original
method is a clear symptom of the fact that for a few samples the 
exit times of the coarse and fine paths are very different.  
With sub-sampling, there is no asymptotic increase in the kurtosis.

\begin{figure}[t!]
\begin{center}
\includegraphics[width=0.9\textwidth]{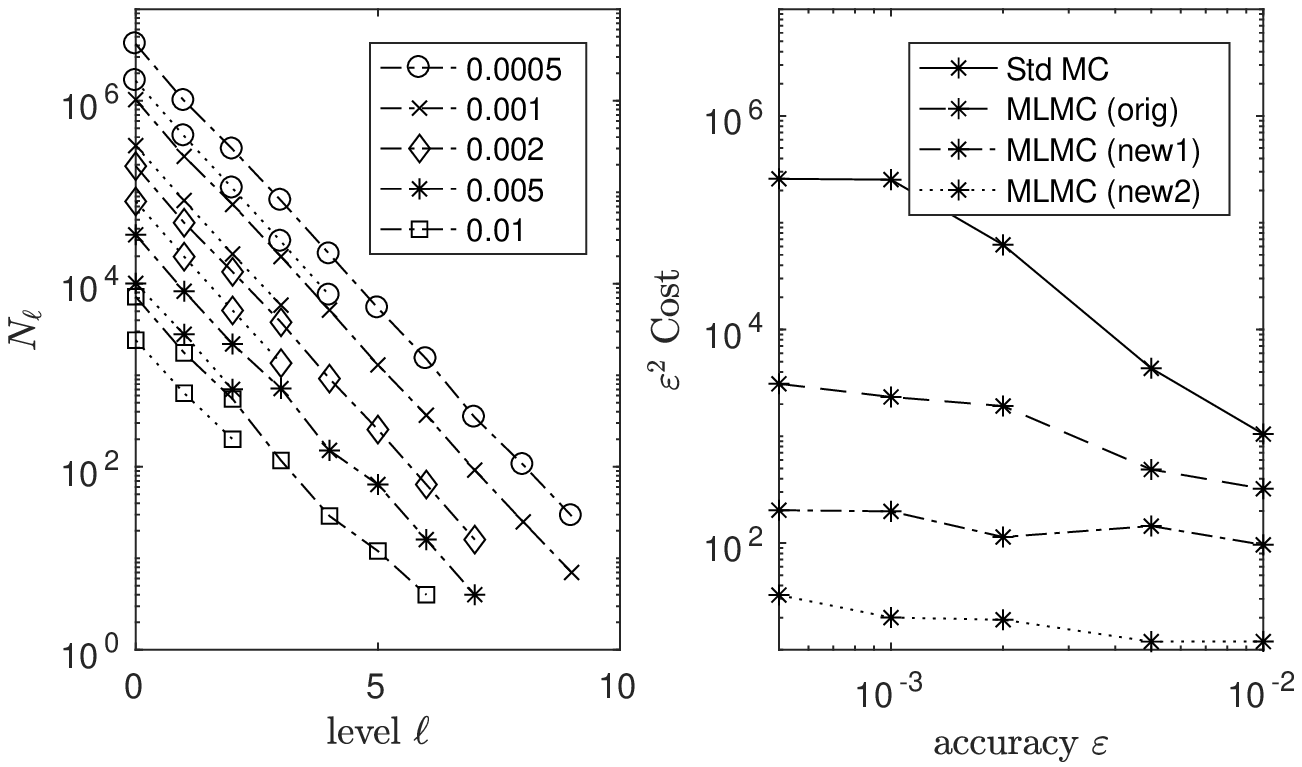}
\end{center}

\caption{Testcase: number of samples on different levels for
different accuracies when using two new MLMC methods 
(dash-dot lines: new1, dotted lines: new2, and cost vs.~accuracy
comparison of three MLMC methods and the standard single level Monte Carlo.}
\label{fig:fk1_2}
\end{figure}

Figure \ref{fig:fk1_2} presents MLMC results for a number 
of different user-specified accuracies $\eps$. The MLMC 
methodology in selecting the near-optimal number of samples
on each level, and finest level $L$, follows the description
given in \cite{giles15}.
The plot on the left shows the number of path samples used 
on level $\ell$ by the best two methods, (new1: dash-dot lines) 
and (new2: dotted lines). As is standard in MLMC, 
for larger values of $\eps$ it is not necessary to use such 
a large value for $L$ to ensure that the weak error (or bias) 
contribution to the desired Mean Square Error is sufficiently 
small.  It is particularly noticeable that the superior weak 
convergence of the Gobet \& Menozzi correction in (new2)
greatly reduces the number of levels needed compared to (new1).

The plot on the right shows the total cost (defined as the 
number of Normal random numbers used) multiplied by 
$\eps^2$.  With the new sub-sampling, this scaled cost
is almost independent of the desired accuracy, which is
consistent with the theory which predicts the cost should 
be $O(\eps^{-2} L^3)$.  Because of the improved weak 
convergence, (new2) uses only half as many levels as 
(new1) and consequently the cost is reduced by a factor 
of 6-8.

\section{Conclusions and extensions}

In this paper we have developed an efficient multilevel
Monte Carlo algorithm for the estimation of mean exit times 
and other associated functionals for stopped diffusions 
within a bounded domain. A root-mean-square
accuracy of $\eps$ is achieved with a computational
complexity which is $O(\eps^{-2} |\log \eps|^3)$.  
The key feature of the algorithm is the use of an 
approximated conditional expectation once either the 
coarse or fine path has exited the domain. This greatly 
reduces the multilevel correction variance without 
significantly increasing the computational cost per path.

There are several directions in which this work can be 
extended.  The first is to linear functionals of the 
PDE solution, such as
\[
P = \int_0^T \int_D \rho(x,t) \ u(x,t) \ \D x\, \D t.
\]
If we take $\rho(x,t)$ to be normalised so that
$\int_0^T \int_D \rho(x,t) \D x\, \D t = 1$, then this is very simply
treated by randomising the starting point for the SDE paths,
taking the starting point $(x_0,t_0)$ from the probability
distribution with density $\rho(x,t)$.  There are similar 
extensions for linear functionals of the solution on the 
boundary.

The next possible extension is to nonlinear functionals of the 
form
\[
P = \int_0^T \int_D \rho(x,t) \ F(u(x,t)) \ \D x\, \D t,
\]
for some nonlinear function $F(u)$.  This case can be tackled 
by representing it as a nested expectation
\[
P = \EE\left[ F(u(x_0,t_0)) \, \right]
\]
with the outer expectation over the starting point $(x_0,t_0)$
and $u(x_0,t_0)$ given by the usual inner expectation from 
(\ref{eq:functional}).  Multilevel techniques for treating 
nested simulations are discussed in \cite{giles15} based on
prior research by Bujok, Hambly \& Reisinger \cite{bhr15}.  
In particular, the multi-index Monte Carlo (MIMC) method 
\cite{hnt16} may be very helpful in this context.

Another direction for extension is to Neumann boundary conditions.
If an application has only homogeneous Neumann boundary conditions,
so the Feynman-Kac functional depends only on the interior path,
then the extension should be straightforward as it is easy to 
simulate the reflected diffusion.  The difficulty will arise if
there are mixed Neumann / Dirichlet boundary conditions.  In this 
case, the MLMC variance could be larger because of the problem 
of one of the fine/coarse path pairs hitting the Dirichlet boundary
while the other hits the Neumann boundary.  In addition, the PDE 
solution may lack the necessary regularity assumed in this paper.

Finally, we could consider the solution of elliptic PDEs.  This 
corresponds to parabolic solutions in the limit in which the 
time duration is extended to infinity.  Multilevel techniques 
for handling this limit are presented and analysed
in \cite{fg17}, as well as being discussed in Section 10.1 in 
\cite{giles15}, based on prior research by Glynn \& Rhee 
on limiting distributions from Markov chains \cite{gr14}.
However, there is a difficulty in the numerical analysis, 
and possibly in practice, which is that the standard strong error 
analysis for a finite time interval $[0,T]$, includes a factor 
which grows exponentially with respect to $T$.  Only under certain 
conditions (see \cite{fg17}), similar to the contracting 
Markov chains considered by Glynn \& Rhee, does this not occur, 
and this may restrict the use of MLMC for this application.

\section*{Acknowledgments}

The authors are very grateful to Abdul-Lateef Haji-Ali, 
Wei Fang and the anonymous referees for their feedback which 
led to significant improvements in the paper.

MBG was funded in part by EPSRC grant EP/H05183X/1, and
FB acknowledges Portuguese FCT funding under grant SFRH/BPD/79986/2011.
In compliance with EPSRC's open access initiative, the data in 
this paper, and the codes which generated it, are available
from doi.org/10.5287/bodleian:XmZwK1zzv.
%\newpage

\bibliographystyle{plain}   
\bibliography{../../bib/mc,../../bib/mlmc}

\end{document}